\def\EMAIL#1{\href{mailto:#1}{#1}}% When hyperref is used, otherwise outcomment 
\newcommand\xqed[1]{%
  \leavevmode\unskip\penalty9999 \hbox{}\nobreak\hfill
  \quad\hbox{#1}}
\newcommand\demo{\xqed{$\triangle$}}
\newcommand{\inP}{\xrightarrow{\mathclap{\text{\scriptsize{p}}}}}
\newcommand{\as}{\xrightarrow{\text{\scriptsize{a.s.}}}}
\newcommand{\inD}{\xrightarrow{\text{\scriptsize{d}}}}
\renewcommand{\qed}{\tag*{$\blacksquare$}}
\renewcommand{\mathcal}{\mathscr}
\DeclarePairedDelimiterX{\expectarg}[1]{[}{]}{%
  \ifnum\currentgrouptype=16 \else\begingroup\fi
  \activatebar#1
  \ifnum\currentgrouptype=16 \else\endgroup\fi
}
\newcommand{\innermid}{\nonscript\;\delimsize\vert\nonscript\;}
\newcommand{\activatebar}{%
  \begingroup\lccode`\~=`\|
  \lowercase{\endgroup\let~}\innermid 
  \mathcode`|=\string"8000
}
\newcommand{\rmd}{{\mathrm{d}}}
\begin{document}
%%%%%%%%%%%%%%%%

% Outcomment only when entries are known. Otherwise leave as is and
%   default values will be used.
%\setcounter{page}{1}
%\VOLUME{00}%
%\NO{0}%
%\MONTH{Xxxxx}% (month or a similar seasonal id)
%\YEAR{0000}% e.g., 2005
%\FIRSTPAGE{000}%
%\LASTPAGE{000}%
%\SHORTYEAR{00}% shortened year (two-digit)
%\ISSUE{0000} %
%\LONGFIRSTPAGE{0001} %
%\DOI{10.1287/xxxx.0000.0000}%

% Author's names for the running heads
% Sample depending on the number of authors;
% \RUNAUTHOR{Jones}
% \RUNAUTHOR{Jones and Wilson}
% \RUNAUTHOR{Jones, Miller, and Wilson}
% \RUNAUTHOR{Jones et al.} % for four or more authors
% Enter authors following the given pattern:
%\RUNAUTHOR{}
%\RUNAUTHOR{Smith and Johnson}

% Title or shortened title suitable for running heads. Sample:
% \RUNTITLE{Predictive Maintenance in Manufacturing}
% Enter the (shortened) title:
\RUNTITLE{Frank-Wolfe on Probability Spaces}

% Full title. Sample:
% \TITLE{Optimal Resource Allocation in Humanitarian Logistics: A Stochastic Programming Approach}
% Enter the full title:
\TITLE{Deterministic and Stochastic Frank-Wolfe \\ Recursion on Probability Spaces}

% Block of authors and their affiliations starts here:
% NOTE: Authors with same affiliation, if the order of authors allows,
%   should be entered in ONE field, separated by a comma.
%   \EMAIL field can be repeated if more than one author
\ARTICLEAUTHORS{%
%\AUTHOR{John Doe,\textsuperscript{a} Jane Smith,\textsuperscript{b}}
%\AFF{\textsuperscript{a}Department of Industrial Engineering, University of XYZ, \EMAIL{john.doe@xyz.edu; \textsuperscript{b}Department of Computer Science, University of ABC, \EMAIL{jane.smith@abc.edu}} 
\AUTHOR{Di Yu}
\AFF{Purdue University, Department of Statistics, \EMAIL{yu1128@purdue.edu}}%, \URL{}}
\AUTHOR{Shane G. Henderson}
\AFF{Cornell University, Operations Research and Information Engineering, \EMAIL{sgh9@cornell.edu}}%, \URL{}}
\AUTHOR{Raghu Pasupathy}
\AFF{Purdue University, Department of Statistics, \EMAIL{pasupath@purdue.edu}}
% Enter all authors
} % end of the block

\ABSTRACT{Motivated by applications in emergency response and experimental design, we consider smooth stochastic optimization problems over probability measures supported on compact subsets of the Euclidean space. With the \emph{influence function} as the variational object, we construct a deterministic Frank-Wolfe (dFW) recursion for probability spaces. The dFW recursion is made especially possible by a lemma that identifies the solution to the infinite-dimensional Frank-Wolfe sub-problem as a Dirac measure concentrating on the minimum of the influence function at the incumbent iterate. Each iterate in dFW is thus expressed through a ``particle update,'' as a convex combination of the incumbent iterate and a Dirac measure. To address common application contexts that have access only to Monte Carlo observations of the objective and influence function, we construct a stochastic Frank-Wolfe (sFW) variation that generates a random sequence of probability measures constructed using minima of increasingly accurate estimates of the influence function. We demonstrate that sFW's optimality gap sequence exhibits $O(k^{-1})$ iteration complexity almost surely and in expectation for smooth convex objectives, and $O(k^{-1/2})$ (in Frank-Wolfe gap) for smooth non-convex objectives. Furthermore, we show that an easy-to-implement fixed-step, fixed-sample version of (sFW) exhibits exponential convergence to $\varepsilon$-optimality. We end with a central limit theorem on the observed objective values at the sequence of generated random measures. To further intuition, we include several illustrative examples with exact influence function calculations.}

%\FUNDING{This research was supported by [grant number, funding agency].}

%Supplemental Material:
%Data Ethics & Reproducibility Note:

% Sample
%\KEYWORDS{Stochastic programming, Decision support,Uncertainty, Disaster response, Optimization}

% Fill in data. If unknown, outcomment the field
\KEYWORDS{} 
%\HISTORY{Received: Month DD, YYYY; Accepted: Month DD, YYYY; Published Online: Month DD, YYYY}

\maketitle
%%%%%%%%%%%%%%%%%%%%%%%%%%%%%%%%%%%%%%%%%%%%%%%%%%%%%%%%%%%%%%%%%%%%%%

% Text of your paper here

\section{INTRODUCTION.} Consider the ``out-of-hospital cardiac arrest'' (OHCA)~\cite{2018myaetal} emergency response problem where a person experiencing an OHCA event needs immediate medical assistance. Volunteers serve as the ``first responders'' to incoming OHCA events, in addition to the usual ambulance response, thereby elevating survival rates. One wants to identify how volunteers should be concentrated so as to maximize the expected survival probability of the next OHCA patient, while recognizing that incoming OHCA events have random locations. Such information can aid in targeted volunteer recruitment efforts or to provide bounds on survival rates for a given number of volunteers.

Let's pose the OHCA problem more concretely, as in~\cite{2022shaetal}.  Suppose $\mu$ represents the concentration of volunteers, that is, the probability measure that, when scaled by the number of volunteers, gives the measure governing the location of volunteers in a city represented by a compact set $\mathcal{X} \subset \mathbb{R}^2.$ Suppose also that $\eta(\cdot)$ denotes a (spatial) probability measure governing the location of an OHCA event supported on $\mathcal{X}$, and $\beta_0: [0,\infty) \to [0,1]$ is a non-decreasing right-continuous function representing the probability of the OHCA patient dying for a given response time. Finally, let $R_{x,\mu} \in \mathbb{R}^+$ be a $\mu$-dependent random variable corresponding to the first response time to an incident occurring at $x \in \mathcal{X}$. Then, assuming that $\mu$ can be chosen, the OHCA emergency response problem seeks a probability measure $\mu$ supported over $\mathcal{X}$ that minimizes the expected probability of death $J(\mu) := \int_{\mathcal{X}} \eta(\mbox{d}x)\int_0^{\infty} P(\beta_0(R_{x,\mu}) \geq u) \, \mbox{d}u$ of the OHCA patient. 

The OHCA emergency response problem is an instance of the following broader class of optimization over probability spaces that forms the focus of this paper: \begin{alignat}{2}\label{mainopt} \!\min_{\mu}\quad  & J(\mu) \nonumber \\ \mbox{subject to} \quad & \mu \in \mathcal{P}(\mathcal{X}). \tag{$P$} \end{alignat} In problem~\eqref{mainopt}, $\mathcal{X}$ is a compact convex subset of $\mathbb{R}^d$, $\mathcal{P}(\mathcal{X})$ is the probability space on $\mathcal{X}$, that is, the space of non-negative Borel measures $\mu$ supported on $\mathcal{X}$ such that $\mu(\mathcal{X}) = 1$, and $J: \mathcal{P}(\mathcal{X}) \to \mathbb{R}$ is a real-valued function having domain $\mathcal{P}(\mathcal{X})$. The objective $J$ is assumed to be known through a blackbox oracle~\cite{2018nes} that returns $J(\mu) \in \mathbb{R}$ at any requested $\mu \in \mathcal{P}(\mathcal{X})$. Indeed, while~\eqref{mainopt} is a variation on the well-studied problem of optimization over the space of measures~\cite{2002molzuy,2000molzuy,2022achi,2022bchi,2013brepik}, recent applications from high-dimensional statistics, signal processing, and machine learning~\cite{2013canfer,2014canfer,2017boygeorec,2012degam,2013fer,2019chublagly,kent2021frankwolfe} have drawn interest in specialized solution methods for~\eqref{mainopt}, especially in light of strides and interest in primal methods for solving corresponding constrained optimization problems over Euclidean spaces. See also Section~\ref{sec:examples} for further concrete examples.

It is often the case that in practical applications, e.g., the OHCA emergency response problem and various examples in Section~\ref{sec:examples}, no blackbox oracle for $J$ is available. To cover such settings, we also consider a variation of~\eqref{mainopt} where the objective $J$ is known through a stochastic oracle, that is, an oracle capable of providing unbiased Monte Carlo samples of $J$ at a requested $\mu \in \mathcal{P}(\mathcal{X})$. Formally, we write this problem as \begin{alignat}{2}\label{smainopt} \!\min_{\mu} \quad & J(\mu)  = \mathbb{E}[F(\mu,Z)] = \int F(\mu,z) \, P(\mathrm{dz}) \nonumber \\ \mbox{subject to} \quad & \mu \in \mathcal{P}(\mathcal{X}), \tag{$sP$} \end{alignat}
where $Z$ is a random variable having distribution $P$ on a measurable space $(\mathcal{Z},\mathcal{A})$, and the function $F(\cdot,\cdot): \mathcal{P}(\mathcal{X}) \times \mathcal{Z} \to \mathbb{R}$ provides for a stochastic unbiased oracle in that $F(\mu,z)$ can be observed at a requested $\mu \in \mathcal{P}(\mathcal{X}), z \sim Z$ with $\mathbb{E}[F(\mu,Z)] = J(\mu)$ for each $\mu \in \mathcal{P}(\mathcal{X})$. So, while objective $J$ in~\eqref{smainopt} is unobservable, an unbiased estimate of $J$ at any requested $\mu \in \mathcal{P}(\mathcal{X})$ can be constructed by drawing independent and identically distributed samples from the distribution of $Z$. When devising a solution recursion to~\eqref{smainopt}, we will assume access to a first-order stochastic oracle that provides unbiased estimates of a useful mathematical object called the \emph{influence function} --- see Section~\ref{sec:lit} and Section~\ref{sec:preliminaries} for further discussion.

\subsection{Summary and Contribution} Our main contibutions are as follows. 
\begin{enumerate}
\item[(a)] We derive a variational Frank-Wolfe recursion operating directly on probability spaces associated with~\eqref{mainopt}, and using the influence function of the objective $J$ as the first-order variational object. Our treatment stipulates only that $J$ possess a certain weak form of the derivative, and in particular, neither stipulates that $J$ has a ``convex loss of linear functional'' (CLLF) structure (see Section~\ref{sec:lit}), nor that the solution to~\eqref{mainopt} is \emph{sparse}, that is, supported on a countable subset of $\mathbb{R}^d$. The introduction of the influence function as the variational object within a recursion seems to have first appeared in~\cite{2019chublagly}. 

\item[(b)] Analogous to calculations in the CLLF context over the space of signed measures, {the deterministic Frank Wolfe recursion~\eqref{dFW} (introduced in Section~\ref{sec:dfw}; see Algorithm~\ref{alg:DFW})} is shown~(see Lemma~\ref{lem:dFW}) to have a sub-problem with a ``closed-form'' solution. Consistent with the historical viewpoint~\cite{2017boygeorec,pokutta2023frankwolfe,1978dunhar}, this ability to efficiently solve the Frank-Wolfe sub-problems is critical to implementation as a ``particle update'' using Dirac measures (see also~\cite{2019caretal,2019futcui,2016liudil,2022caretal}), and also to constructing key variations such as fully corrective Frank-Wolfe (see Algorithms~\ref{alg:DFW} and~\ref{alg:fullycorrectivestoch}). Our use of the influence function within a Frank-Wolfe recursion presents an interesting contrast to~\cite{kent2021frankwolfe}, where a Wasserstein derivative~\cite{2005ambluisav,2015san} is used within a Frank-Wolfe recursion for optimizing a functional $J$ defined over the (smaller) space $\mathcal{P}_2(\mathbb{R}^d)$ of Borel probability measures equipped with the Wasserstein metric of order 2. Since no closed-form solution is evident, the authors in~\cite{kent2021frankwolfe} present an elaborate algorithm to solve the resulting sub-problems. An explicit relationship can be established between the influence function introduced here and the Wasserstein derivative in~\cite{kent2021frankwolfe}, but we do not go into further detail.    

\item[(c)] We show~(see Theorem~\ref{thm:dFWcomp}) that the iterates resulting from~\eqref{dFW} enjoy $O(k^{-1})$ iteration complexity in functional value under a smoothness assumption on the objective $J$. For stochastic and statistical settings where only unbiased estimates of the influence function are available, we present a stochastic analogue~\eqref{sfw} of~\eqref{dFW}, {introduced in Section~\ref{sec:sfw}; see Algorithm~\ref{alg:fullycorrectivestoch}.} This version solves a sampled version of the Frank-Wolfe sub-problem. Here again, a ``closed-form'' solution to the Frank-Wolfe sub-problem expressed in terms of the minimum of the sampled influence function emerges. Theorem~\ref{thm:sfwcomplexity} identifies a stepsize and sample size relationship to guarantee $O(k^{-1})$ complexity in expectation, and Theorem~\ref{thm:sfwasconsistency} identifies an almost sure convergence rate. Theorem~\ref{thm:sfwclt} identifies the exact asymptotic distribution of the estimated functional values at the~\eqref{sfw} iterates through a central limit theorem.
\item[(d)] In settings where the objective $J$ is nonconvex, Theorem~\ref{thm:sfwcomplexNonConvex} demonstrates that under a certain choice of the step size and sample size, the so-called Frank-Wolfe gap attains the $O(1/\sqrt{T})$ bound that is also achieved in Euclidean settings.
\item[(e)] Given our viewpoint of the influence function as the first-order variational object when solving stochastic optimization problems on probability spaces, we prove a number of optimization-related basic structural results expressed in terms of the influence function. For example,  conditions on optimality (Lemma~\ref{lem:optCondition}), nature of the support of the optimal measure (Lemma~\ref{lem:supportopt}), and continuity of the influence function (Lemma~\ref{lem:infcont}).
\end{enumerate}

\subsection{Paper Organization} The ensuing Section~\ref{sec:lit} provides perspective on the relationship of the existing literature with the current paper especially from the standpoints of the CLLF structure, influence function, and sparsity. Section~\ref{sec:preliminaries} gives some mathematical preliminaries, followed by Section~\ref{sec:examples} which discusses several examples. Sections~\ref{sec:dfw} and \ref{sec:sfw} introduce and treat the (dFW) and (sFW) recursions for~\eqref{mainopt} and for~\eqref{smainopt}, respectively. Section~\ref{sec:numer} contains a numerical example and Section~\ref{sec:conc} concludes.  

%As our exposition will make evident, solving~\eqref{mainopt} in all generality tends to be challenging, while very challenging to solve in full generality, becomes vastly more tractable when we have access to unbiased estimates of the \emph{influence function} associated with $J$. (See) Section~\cite{sec:preliminaries} To solidify intuition and to illustrate the surprisingly wide range of problems that fall within the class described by 

%Recently, there has been increasing attention~\cite{eftekhari2019sparse,boyd2015alternating} on  solving optimization problems of the type  on problems of the type~\eqref{mainopt} with numerous concrete applications from statistics, 

%As such, Problem~\eqref{mainopt} will be our focus throughout the paper.

\section{LITERATURE AND PERSPECTIVE.} \label{sec:lit}

The optimization problem in~\eqref{mainopt} is on the space $\mathcal{P}(\mathcal{X})$ of probability measures, that is, $\mathcal{P}(\mathcal{X})$ is the space of (non-negative) measures $\mu$ defined on a measurable space $(\mathcal{X},\Sigma)$ with $\mu(\mathcal{X}) = 1$. The space $\mathcal{P}(\mathcal{X})$ is not a vector space since $\mu_1,\mu_2 \in \mathcal{P}(\mathcal{X})$ does not imply that $\mu_1 + \mu_2 \in \mathcal{P}(X)$, nor that $c \mu \in \mathcal{P}(\mathcal{X})$ for $c \in \mathbb{R}$ and $\mu \in \mathcal{P}(\mathcal{X})$. The natural way to remedy this issue is to extend the objective $J$ in~\eqref{mainopt} to the (Banach) space of \emph{signed measures} $\mathcal{M}(\mathcal{X})$ --- see Definition~\ref{def:measure}. Through such extension, one can in principle leverage the existing standard algorithms for optimizing over the space of signed measures~\cite{2002molzuy,2000molzuy,2022achi,2022bchi,2013brepik}. However, two key complications arise with such an approach. First, extending $J$ to $\mathcal{M}(\mathcal{X})$ meaningfully is often not simple or direct. Second, since the original problem~\eqref{mainopt} is on $\mathcal{P}(\mathcal{X})$, an algorithm that generates iterates on $\mathcal{M}(\mathcal{X})$ might either have to use an implicit projection onto $\mathcal{P}(\mathcal{X})$, or explicitly characterize a descent step that keeps the iterates within $\mathcal{P}(\mathcal{X})$. The former idea of projection is challenging since $\mathcal{P}(\mathcal{X})$ is not a Hilbert space, and there exists no obvious notion of orthogonality. The latter idea has some history --- for instance, Theorem~4.1 in~\cite{2002molzuy} characterizes a step sequence $\eta_{k} \in \mathcal{M}(\mathcal{X}), k \geq 1$ so that the generated iterate sequence $\mu_{k+1} = \mu_k + \eta_k$ continues along the ``steepest descent'' direction while $\mu_{k+1}$ remains in the feasible region, which is $\mathcal{P}(\mathcal{X})$ in the current context. It is important that while the step $\eta_k$ is characterized, actual computation of $\eta_k$ is not easy, making implementation quite intricate. This is why, in the classical optimal design literature~\cite{2012fedhac,1983puk}, the most successfully implemented methods obtain each subsequent iterate $\mu_{k+1}$, not by adding a descent step $\eta_k \in \mathcal{M}(\mathcal{X})$ as in~\cite{2002molzuy}, but by taking the convex combination $\mu_{k+1} = (1-t_k)\mu_k + t_k\nu_k$ where $\mu_k, \nu_k \in \mathcal{P}(\mathcal{X})$. This is a simple strategy to keep the iterates \emph{primal feasible}, that is, within the space $\mathcal{P}(\mathcal{X})$, without having to project or perform intricate step calculations. Indeed the methods we describe next, and those we propose here, use this key idea. (For an example on experimental design, see Example 4.3 in Section~\ref{sec:examples}.) 

Over the previous six years or so, on the heels of the revival elsewhere of a well-known idea called the Frank-Wolfe recursion, a.k.a. the conditional gradient method (CGM)~\cite{frank1956algorithm,1970demrub,1978dunhar,pokutta2023frankwolfe}, there has been interest and success~\cite{eftekhari2019sparse,2017boygeorec} in solving variations of~\eqref{mainopt} where the objective $J$ has a \emph{convex loss of a linear functional} (CLLF) structure: 
\begin{alignat}{2}\label{mainoptspec} \min_{\mu} \quad & J_1(\mu) :=  \ell\bigg(\Phi_{\mu} - y_0\bigg), \quad \Phi_{\mu} = \int_{\Theta} \psi(x) \, \mu(\mathrm{d}x) \nonumber \\ \mbox{subject to} \quad & \|\mu \| \leq \tau; \quad \mu \in \mathcal{M}(\Theta). \tag{$P_0$} 
\end{alignat} In~\eqref{mainoptspec}, $\psi: \Theta \to \mathbb{R}^d$ is differentiable, $\Theta$ is a compact subset of $\mathbb{R}^d$, $\ell: \mathbb{R}^{d} \to \mathbb{R}$ is a convex, differentiable loss function, $\tau>0$ is a positive constant, $\| \mu \|$ refers to the total variation norm of the measure $\mu$, and $y_0 \in \mathbb{R}^{d}$ is a constant vector. The authors in~\cite{2017boygeorec}, apparently the first to  propose CGM to solve such problems, motivate the context with the following ``loss-recovery'' optimization problem in Euclidean space
\begin{alignat}{2}\label{findimstart} \min_{\theta,w,K} \quad & \ell\left(\sum_{j=1}^K w_{j}\Phi(\theta_{j}) - y_0 \right) \nonumber \\ \mbox{subject to} \quad & K \leq N. \tag{$P_1$} \end{alignat} In the above, the decision variables $\theta \in \Theta \subset \mathbb{R}^d$ for $\Theta$ compact and convex, and $N \in \mathbb{N}$ is some known upper bound. In the usual application settings abstracted by~\eqref{findimstart}, $y_0$ is an observed noisy signal, $\theta_i, i=1,2,\ldots,K$ are ``locations'' of sources, $w_i, i=1,2,\ldots,K$ their weights, and $\ell$ a loss function. The formulation~\cite{2017boygeorec} thus looks to identify $\theta,w,K$ that minimizes deviation from the observed signal $y_0$, as quantified through the loss function $\ell$. Noticing that the objective in~\eqref{findimstart} may be nonconvex even if $\ell$ is convex, the authors in~\cite{2017boygeorec} re-frame the problem by \emph{lifting} into the space $\mathcal{M}_K := \{\mu: \mu = \sum_{j=1}^K w_j \delta_{\theta_j}\}$ of weighted atomic measures supported on a finite number of points.
%\begin{remark} As a matter of terminology, ~\cite{2017boygeorec} describe the space $\mathcal{M}_K$ of Dirac measures as \emph{sparse} since $K$ is finite, and generally ``very small.'' Furthermore, $\mathcal{M}_K$ is not a subspace of probability measures since no restriction is placed on the sign of the weights $w_j, j =1,2, \ldots,n$. And, $\mathcal{M}_K$ is neither closed under addition, that is, $x,y \in \mathcal{M}_K \notimplies x+y \in \mathcal{M}_K$, nor is it closed under scalar multiplication, $x \in \mathcal{M}_K \notimplies cx \in \mathcal{M}_K$. The constraint $\mu \in \mathcal{P}(\mathcal{X})$ in~\eqref{mainopt} should be seen as the analogue of the $\sum_{j=1}^k w_j =1$ constraint in~\eqref{findimstart}; beware that such analogy is only loose since $\mathcal{P}(\mathcal{X})$ is the space of \emph{probability measures} (and not \emph{signed measures}) that is not closed even under addition. \demo \end{remark} 

Lifting into the space $\mathcal{M}_K$ is a crucial idea since it endows the CLLF structure to the objective in the lifted space, and allows~\eqref{mainoptspec} to be solved through CGM, whereby each subsequent iterate $\mu_{n+1}$ in the generated solution sequence $\{\mu_n, n \geq 1\}$ is obtained as a convex combination of the incumbent iterate $\mu_n$ and the ``closed form'' solution to a CGM subproblem. In particular,~\cite{2017boygeorec} show that the CGM subproblem amounts to minimizing the linear approximation to $J_1$ at $\mu_n$ over $\theta \in \Theta$, that is, solving
\begin{alignat}{2}\label{cgmsub}
\min_{\theta} \quad & F(\theta) := \left\langle \nabla \ell(\int \Phi(x) \, \mu_n( \mathrm{d}x) - y_0), \Phi(\theta) \right\rangle \nonumber \\ \mbox{subject to} \quad & \theta \in  \Theta. \tag{$P_0$-sub} \end{alignat}
Furthermore,~\cite{2017boygeorec} also argue that that the problem in~\eqref{cgmsub} has the ``closed-form'' solution $-\mbox{sgn}(F(\theta_*)) \, \delta_{\theta^*}$ where $\theta^* = \arg\max | F(\theta) |, \theta \in \Theta$. The closed-form solution $-\mbox{sgn}(F(\theta_*)) \, \delta_{\theta^*}$, apart from allowing the method to approach the solution to the infinite-dimensional problem~\eqref{mainoptspec} through a sequence of finite-dimensional (although nonconvex) subproblems, ensures a simple update scheme whereby a single support point is added to $\mu_n$ to obtain the next iterate $\mu_{n+1}$. Owing to wide applicability, CGM and its variants for solving~\eqref{cgmsub} have become enormously popular over the past six years, since the seminal ideas in~\cite{2017boygeorec}.

\subsection{The Influence Function.}
Is it possible to generalize the ideas of~\cite{2017boygeorec} to operate on $\mathcal{P}(\mathcal{X})$ directly, and to objectives that do not have the CLLF structure? What can be said about the sparsity of solutions obtained through any such generalization? These questions are important because the objectives in problems of the type~\eqref{mainopt}, including the emergency response problem described earlier, do not naturally endow the CLLF structure or sparsity. (See Section~\ref{sec:examples} for additional examples that illustrate this point.) It thus becomes relevant to more closely investigate the extent to which the efficiencies enjoyed by the CGM paradigm are due to the CLLF structure, and whether the requirement for sparsity can be relaxed. 

Indeed, an examination of the calculations leading to the closed-form solution $-\mbox{sgn}(F(\theta_*)) \, \delta_{\theta^*}$ suggests that a certain type of weak differential structure as encoded through the classical \emph{influence function}~\cite{2022shaetal,1983fer,2011fer,2019chublagly} of $J$, as opposed to the CLLF structure, may be the crucial ingredient for efficiency. To explain more precisely, recall that the von Mises derivative $J'_{\mu}(\cdot): \mathcal{P}(\mathcal{X}) \to \mathbb{R}$ associated with a functional $J: \mathcal{P}(\mathcal{X}) \to \mathbb{R}$ is given by
$J'_{\mu}(\nu) = \lim_{\varepsilon \to 0^+} \frac{1}{\varepsilon} \left\{ J( (1-\varepsilon)\mu + \varepsilon \nu ) - J(\mu) \right \}$ if there exists a function $\phi_{\mu}: \mathcal{X} \to \mathbb{R}$ such that $J'_{\mu}(\nu) = \mathbb{E}_{X \sim \nu}[\phi_{\mu}(X)] - \mathbb{E}_{X \sim \mu}[\phi_{\mu}(X)]$. The influence function is defined as $
h_{\mu}(x) = \lim_{\varepsilon \to 0^+} \frac{1}{\varepsilon} \left\{ J( (1-\varepsilon)\mu + {\varepsilon}\delta_x ) - J(\mu) \right \}$, where $\delta_x$ is the Dirac measure (or ``atomic mass'') at $x \in \mathcal{X}$. The von Mises derivative and the influence function of $J$ should be understood as weak forms of a functional derivative for $J$, with the function $\phi_{\mu}$ and the influence function $h_{\mu}$ coinciding to within a constant when the von Mises derivative exists. (See Definition~\ref{vonmises} for more discussion.) 

Now, let's observe that $F(\theta)$ appearing in~\eqref{cgmsub} is indeed the influence function of $J_1$ at $\mu_k$ along $\delta_{\theta} - \mu_k$ since, under sufficient regularity conditions,
\begin{align*}\label{infalready}
\lefteqn{J'_{1,\mu}(\nu) := \lim_{\varepsilon \to 0^+} \frac{1}{\varepsilon} \left\{\ell\left(\int \Phi(x) \, ((1-\varepsilon)\mu + \varepsilon\, \nu)(\mathrm{d}x) - y_0\right) - \ell\left(\int \Phi(x) \, \mu(\mathrm{d}x) - y_0\right) \right\}} \nonumber \\
&= \lim_{\varepsilon \to 0^+} \frac{1}{\varepsilon} \left\{ \ell\left(\int \Phi(x) \, \mu(\mathrm{d}x) - y_0\right) + \left \langle \nabla \ell\left(\int \Phi(x) \, \mu(\mathrm{d}x) - y_0\right) , \int \Phi(x) \varepsilon \, (\nu-\mu)(\mathrm{d}x) \right\rangle  + o(\varepsilon^2) \right.\\
& \qquad \qquad \left. - \ell\left(\int \Phi(x) \, \mu(\mathrm{d}x)-y_0\right)\right\} \nonumber \\ &= \left\langle \nabla \ell\left(\int \Phi(x) \, \mu(\mathrm{d}x) - y_0\right) , \int \Phi(x) (\nu-\mu)(\mathrm{d}x)  \right\rangle.
\end{align*}
Through a similar calculation, we see that the influence function of $J_1$ is
\begin{align} h_{\mu}(x) &= J'_{1,\mu}(\delta_x) \nonumber \\
&=  \left\langle \nabla \ell\left(\int \Phi(z) \, \mu(\mathrm{d}z) - y_0\right) , \Phi(x) - \int \Phi(z) \, \mu(\mathrm{d}z) \right\rangle, \quad x \in \mathbb{R}^d\end{align}
coinciding with $F(\theta)$ in~\eqref{cgmsub} to within an additive constant. 

%\subsection{Why the Influence Function?}

The influence function in the current context is a weak derivative (or first variation) of a functional defined on a probability space. This suggests that, in the preceding discussion, the existence of a well-behaved influence function, as opposed to the CLLF structure, is the key ingredient when constructing general first-order methods for optimizing over probability spaces. As we shall show through Lemma~\ref{lem:dFW}, the influence function is a succinct and meaningful object with which to express the ``closed-form'' solutions of subproblems appearing within CGM-type analogues for probability spaces. Since the support of an optimal measure to~\eqref{mainopt} is a subset of the set of zeros of the influence function (see Lemma~\ref{lem:supportopt}), the nature of the set of zeros of the influence function or its gradient function often determine whether the optimal measure has sparse support. %It appears thus that in the context of optimization over probability measures, a deterministic or stochastic oracle for observing the influence function is a good candidate for an analgous of the first-order oracle in the Euclidean context.

\begin{remark}The influence function is well-recognized as a useful mathematical object in statistics and econometrics, appearing in several incarnations. For instance: (i) as a derivative, it forms the linchpin of the theory of robust statistics especially when measuring the sensitivity of estimators to model misspecification or changes~\cite{1974ham,1996hub,2017andgensha,2009firetal}; (ii) as the summand, when approximating non-linear (but asymptotically linear) estimators using a simple sample mean~\cite{2015sha}, and (iii) for orthogonal moment construction~\cite{2018cheetal,2022cheetal} analogous to the Gram-Schmidt process, especially when debiasing high-dimensional machine-learning estimators. We hasten to add, however, that the influence function may not always exist, and even when it does exist, may only be observed with error (see Sections~\ref{sec:examples} and \ref{sec:sfw}). \demo\end{remark}

%\subsection{Particle Methods} \rpcomment{This section is work in progress.}

\subsection{Why not simply ``grid and optimize''?}\label{sec:grid}

Recall that the problem we consider is an optimization problem over the space $\mathcal{P}(\mathcal{X})$ of probability measures supported on $\mathcal{X}$. The natural way to address the ``infinite-dimensionality'' of $\mathcal{P}(\mathcal{X})$ during computation is to simply ``grid,'' that is, construct a lattice $\mathcal{L}(\Delta)$ having resolution $\Delta>0$ over the compact set $\mathcal{X} \subset \mathcal{R}^d$ and then perform the optimization over the space of probability measures having finite support $\mathcal{L}(\Delta)$. Such a method is sound in that as $\Delta \to 0$, the solution to the (gridded) finite-dimensional approximation can be expected to approach (in some sense) a solution to the optimization problem on $\mathcal{P}(\mathcal{X})$. Furthermore, the obvious advantage of such a strategy is that the power of nonlinear programming on finite-dimensional spaces can immediately be brought to bear.

While the gridding strategy is attractive due to its simplicity, the results are generally poor especially as the resolution size $\Delta$ becomes small, or as the dimension $d$ becomes large. (See, for example, the interesting simple experiment devised in~\cite{eftekhari2019sparse} to illustrate this issue, and also the discussion in~\cite{2017boygeorec}.) The fundamental drawback of gridding is that a \emph{uniform grid} implicitly ignores the structure (e.g., smoothness or convexity) of the objective $J$, and a \emph{non-uniform grid} that adapts to $J$'s structure is very challenging to implement correctly as has been (implicitly) noted in~\cite{eftekhari2019sparse}, and in other infinite-dimensional contexts~\cite{2005canuto,2017ulbzie,2019garulb}.  

The method we propose here circumvents gridding altogether, by constructing a first-order recursion that operates directly in the infinite-dimensional space. The key enabling machinery is the influence function, which when embedded as a first-order variational object within a primal recursion such as Frank-Wolfe, removes the need to \emph{a priori} finite-dimensionalize. The proposed recursion is not without challenge, however, as it entails solving a global optimization problem during each iterate update. The question of precisely characterizing and comparing the complexity of an a priori finite-dimensionalizing approach such as gridding versus the proposed direct approach is indeed interesting, although we do not undertake this question.

\section{PRELIMINARIES.}\label{sec:preliminaries} In this section, we discuss concepts, notation, and important results invoked in the paper. 

\subsection{Definitions.} \begin{definition}[Measure, Signed Measure, Probability Measure]\label{def:measure} Let $(\mathcal{X},\Sigma)$ be a measurable space. A set function $\mu: \Sigma \to \mathbb{R}^+ \cup \{\infty\}$ is called a \emph{measure} if (a) $\mu(A) \geq 0 \,\, \forall A \in \Sigma$, (b)  $\mu(\emptyset)=0$, and (c) $\mu\left(\bigcup_{j=1}^{\infty} A_i \right) = \sum_{j=1}^{\infty} \mu(A_i)$ for a countable collection $\{A_j, j \geq 1\}$ of pairwise disjoint sets in $\Sigma$. The set function $\mu$ is called a \emph{signed measure} if the non-negativity condition in (a) is dropped and the infinite sum in (c) converges absolutely. It is called a \emph{$\sigma$-finite measure} if there exists a countable collection $\{A_j, j \geq 1\}$ such that $\mu(A_j) < \infty, j \geq 1$ and $\bigcup_{j=1}^{\infty} A_j = \mathcal{X}$, and a \emph{probability measure} if $\mu(\mathcal{X}) = 1$. In the current paper $\mathcal{X} \subseteq \mathbb{R}^d$, $\Sigma \equiv \mathcal{B}(\mathcal{X})$ is the Borel $\sigma$-algebra on $\mathcal{X}$, and $\mathcal{P}(\mathcal{X})$ refers to the set of probability measures on $(\mathcal{X},\mathcal{B}(\mathcal{X}))$. \demo

\begin{definition}[Support] The \emph{support} of a probability measure $\mu \in \mathcal{P}(\mathcal{X})$ is the set consisting of points $x$ such that every open neighborhood of $x$ has positive probability under $\mu$. Formally, \begin{equation}\label{suppdef} \mathrm{supp}(\mu) := \bigcup \left\{ x \in \mathcal{X}: \mu(N_x)>0, N_x \mbox{ is any open neighborhood of } x\right\}.
\end{equation} Equivalently, $\mathrm{supp}(\mu)$ is the largest set $C$ such that any open set having a non-empty intersection with $C$ has positive measure assigned to it by $\mu$. The support should be loosely understood as the smallest set such that the measure assigned to the set is one.  \end{definition}

\end{definition}

\begin{definition}[Influence function and von Mises Derivative]\label{vonmises} Suppose $J: \mathcal{P}(\mathcal{X}) \to \mathbb{R}$ is a real-valued function, where $\mathcal{P}(\mathcal{X})$ is a convex space of probability measures on the measurable space $(\mathcal{X},\Sigma)$. There exist various related notions of a derivative of $J$, a few of which we describe next. (See~\cite{1983fer} for more detail.).

The \emph{influence function} $h_{\mu}: \mathcal{X} \to \mathbb{R}$ of $J$ at $\mu \in \mathcal{P}(\mathcal{X})$ is defined as \begin{equation}\label{influence} h_{\mu}(x) = \lim_{t \to 0^+}\frac{1}{t} \bigg\{ J(\mu + t(\delta_x - \mu)) - J(\mu) \bigg\}, \end{equation} where $\delta_x := \mathbb{I}_{A}(x), A \subset \mathcal{X}$ is the Dirac measure (or atomic mass) concentrated at $x \in \mathcal{X}$~\cite{1983fer,2011fer}. The influence function should be loosely understood as the rate of change in the objective $J$ at $\mu$, due to a perturbation of $\mu$ by a Dirac measure (point mass) $\delta_x$.

The \emph{von Mises derivative} is defined as
$$J'_{\mu}(\nu) := \lim_{t \to 0^+}\, \frac{1}{t} \bigg\{J(\mu + t(\nu - \mu)) - J(\mu)\bigg\}, \quad \mu,\,\nu\in \mathcal{P}(\mathcal{X}),
$$
provided $J'_{\mu}(\cdot)$ is \emph{linear} in its argument, that is, there exists a function $\phi_{\mu}: \mathcal{X} \to \mathbb{R}$ such that \begin{align} \label{vonlinear} J'_{\mu}(\nu) &= \int \phi_{\mu}(x)\, \mathrm{d}(\nu - \mu)(x) \\ &=: \mathbb{E}_{X \sim \nu}[\phi_{\mu}(X)] - \mathbb{E}_{X \sim \mu}[\phi_{\mu}(X)]. \nonumber \end{align} When~\eqref{vonlinear} holds, we can see that $\phi_{\mu}$ in~\eqref{vonlinear} and $h_{\mu}$ in~\eqref{influence} coincide to within a constant since $\mathrm{d}(\nu - \mu)$ has total measure zero. As implied by Lemma~\ref{lem:infexists} in Section~\ref{sec:supp}, the influence function is a weak form of a derivative. It is strictly weaker than the von Mises derivative in the sense that it exists if the von Mises derivative exists, but the converse is not true --- see Example 2.2.2 in~\cite{1983fer}. \demo \end{definition}

%The influence function is normalized as $\overline{h}_{\mu}(x) := h_{\mu}(x) - \mathbb{E}_{x \sim \mu}[h_{\mu}(x)],$ so that $\mathbb{E}_{x \sim \mu}\left[\overline{h}_{\mu}(x)\right] = 0.$
\begin{definition}[G\^{a}teaux, Fr\'{e}chet and Hadamard Derivatives]\label{frechet} 
To understand the influence function's relationship to other (stronger forms of) functional derivatives, suppose $J$ is defined on an open subset of a normed space that contains $\mu$. A \emph{continuous linear} functional $J'(\cdot;\mu)$ is a \emph{derivative} of $J$ if \begin{equation} \label{funcder} \lim_{t \to 0^+} \frac{1}{t} \bigg\{J(\mu + t(\nu - \mu)) - J(\mu) \bigg\} - J'(\nu - \mu; \mu)  = 0\end{equation} for $\mu$ in subsets of the domain of $J$. Various functional derivatives are defined depending on how~\eqref{funcder} holds. For instance, if~\eqref{funcder} holds pointwise in $\mu$, then $J'(\cdot;\mu)$ is called a \emph{G\^{a}teaux derivative}; it is called a \emph{Hadamard derivative} if~\eqref{funcder} holds uniformly on compact subsets of the domain of $J$, and a \emph{Fr\'{e}chet derivative} if~\eqref{funcder} holds uniformly on bounded subsets of the domain of $J$. Accordingly, Fr\'{e}chet differentiability is the most stringent and implies Hadamard differentiability, which in turn implies Gateaux differentiability. In each case, the influence function is the central ingredient since, from~\eqref{vonlinear}, we have \begin{align}\label{central}J'_{\mu}(\nu) &= \int \phi_{\mu}(x) \, \mathrm{d}(\nu - \mu)(x) \nonumber \\ &=  \mathbb{E}_{X\sim \nu}[h_{\mu}(X)],\end{align} {where the second equality follows from the fact that the influence function can be expressed as  \[
h_{\mu}(x) = \int \phi_{\mu}(y) \, \mathrm{d}(\delta_x - \mu)(y) = \phi_{\mu}(x) - \int \phi_{\mu}(y) \, \mathrm{d}\mu(y),
\]which implies  \[
\int h_{\mu}(x) \, \mathrm{d}\mu(x) =\int \left( \phi_{\mu}(x) - \int \phi_{\mu}(y) \, \mathrm{d}\mu(y) \right) \mathrm{d}\mu(x) = 0.
\]} \demo \end{definition}

A complicating aspect of the problem considered in this paper is that the stronger forms of the derivative as defined through~\eqref{funcder} require a vector space as the domain of $J$ (since $t$ can approach zero from either side) whereas the space $\mathcal{P}(\mathcal{X})$ of probability measures is not a vector space. The space $\mathcal{P}(\mathcal{X})$ can be extended to form a vector space through the definition of signed measures but, as we shall see, the structure of the Frank-Wolfe recursion that we consider is such that it obviates such a need, while also allowing the use of a weaker functional derivative such as the influence function. 

\begin{definition}[{\em L}-Smooth]\label{def:Lsmootn} The functional $J: \mathcal{P}(\mathcal{X}) \to \mathbb{R}$ is $L$-smooth with constant $L$ if it satisfies \begin{align}\label{Lsmooth}
    \sup_{x \in \mathcal{X}} |h_{\mu_1}(x)-h_{\mu_2}(x) | \, & \leq \, L \|\mu_1 - \mu_2 \|, \quad \forall \mu_1 ,\, \mu_2 \in \mathcal{P}(\mathcal{X}),
\end{align} where $h_{\mu_1}$ and $h_{\mu_2}$ are corresponding influence functions, and the total variation distance between $\mu_1$ and $\mu_2$ in $\mathcal{P}(\mathcal{X})$ is defined as
\begin{align}\label{totvar}
    \|\mu_1 - \mu_2 \| &:=\sup_{A\in \mathcal{B}(\mathcal{X})} |\mu_1(A)-\mu_2(A) |,
\end{align} where $\mathcal{B}(\mathcal{X})$ is the Borel $\sigma$-algebra.  As written, the symbol $\| \cdot \|$ appearing in~\eqref{totvar} does not refer to a norm but our use of such notation is for convenience and should cause no confusion.\demo
\end{definition} 

\subsection{Basic Properties}\label{sec:supp} We list some basic properties that are directly relevant to the content of the paper. The ensuing result asserts that if $J$ in~\eqref{mainopt} is convex, then a point $\mu^* \in \mathcal{P}(\mathcal{X})$ is optimal if and only the influence function at $\mu^*$ is non-negative. This result justifies the analogous roles that the influence function and the directional derivative play in the respective contexts of optimization over probability space and the Euclidean space.

\begin{lemma}[Conditions for Optimality]\label{lem:optCondition} Suppose $J: \mathcal{P}(\mathcal{X}) \to \mathbb{R}$ is convex, and the von Mises derivative exists at $\mu^* \in \mathcal{P}(\mathcal{X})$ along any ``direction'' $\nu - \mu^*$ where $\nu \in \mathcal{P}(\mathcal{X})$. The influence function $h_{\mu^*}$ at $\mu^*$ is defined in~\eqref{influence}. Then, $\mu^*$ is optimal, that is, $J(\nu) \geq J(\mu^*)$ $\forall \nu \in \mathcal{P}(\mathcal{X})$ if and only if $h_{\mu^*}(x) \geq 0$ for all $x \in \mathcal{X}$.
\end{lemma}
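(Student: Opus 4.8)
The plan is to prove the equivalence by exploiting convexity of $J$ to relate the influence function to the global behavior of $J$, treating the two directions of the biconditional separately.

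First I would establish the "only if" direction (optimality $\implies$ non-negativity), which is the easier half. Suppose $\mu^*$ is optimal, so $J(\nu) \geq J(\mu^*)$ for all $\nu \in \mathcal{P}(\mathcal{X})$. Fix an arbitrary $x \in \mathcal{X}$ and take $\nu = \delta_x$, which lies in $\mathcal{P}(\mathcal{X})$. Then for every $t \in (0,1]$, the measure $\mu^* + t(\delta_x - \mu^*) = (1-t)\mu^* + t\delta_x$ is a valid probability measure (here is where primal feasibility of convex combinations, emphasized in Section~\ref{sec:lit}, is used), so $J(\mu^* + t(\delta_x - \mu^*)) - J(\mu^*) \geq 0$. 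Dividing by $t > 0$ and letting $t \to 0^+$ gives $h_{\mu^*}(x) \geq 0$ directly from the definition~\eqref{influence}. Since $x$ was arbitrary, this yields $h_{\mu^*}(x) \geq 0$ for all $x \in \mathcal{X}$.

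For the "if" direction (non-negativity $\implies$ optimality), I would invoke convexity of $J$ together with the von Mises derivative. The key inequality is the first-order convexity bound: for a convex functional, the directional derivative underestimates the increment, i.e. $J(\nu) - J(\mu^*) \geq J'_{\mu^*}(\nu - \mu^*)$ for any $\nu \in \mathcal{P}(\mathcal{X})$. This follows because the map $t \mapsto J(\mu^* + t(\nu - \mu^*))$ is convex on $[0,1]$, so its difference quotient $\tfrac{1}{t}\{J(\mu^* + t(\nu-\mu^*)) - J(\mu^*)\}$ is non-decreasing in $t$ and hence bounded below by its limit as $t \to 0^+$, which is exactly $J'_{\mu^*}(\nu - \mu^*)$. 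Now, using the linear representation~\eqref{vonlinear} of the von Mises derivative and the fact that the influence function coincides with $\phi_{\mu^*}$ up to an additive constant (so that $J'_{\mu^*}(\nu-\mu^*) = \mathbb{E}_{X \sim \nu}[h_{\mu^*}(X)]$ as recorded in~\eqref{central}), the assumption $h_{\mu^*} \geq 0$ pointwise gives $\mathbb{E}_{X \sim \nu}[h_{\mu^*}(X)] \geq 0$. Combining, $J(\nu) - J(\mu^*) \geq 0$ for all $\nu$, which is optimality.

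The main obstacle I anticipate is the careful justification of the first-order convexity inequality in the probability-space setting, specifically the monotonicity of the difference quotient and the passage to the limit. Unlike the Euclidean case, one must verify that convexity of $J$ on $\mathcal{P}(\mathcal{X})$ (a convex set under mixtures, though not a vector space) transfers to convexity of the scalar function $g(t) := J((1-t)\mu^* + t\nu)$, and that the one-sided limit defining $J'_{\mu^*}$ exists and equals the infimum of the difference quotients. A secondary subtlety is cleanly moving from the influence-function representation along the Dirac directions $\delta_x$ to the general von Mises derivative along arbitrary $\nu$; this rests on the identity~\eqref{central} relating $\mathbb{E}_{X \sim \nu}[h_{\mu^*}(X)]$ to $J'_{\mu^*}(\nu - \mu^*)$, whose validity presupposes existence of the von Mises derivative at $\mu^*$ — precisely the hypothesis granted in the statement.
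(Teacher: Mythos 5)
Your proof is correct and follows essentially the same route as the paper's: the "if" direction uses the first-order convexity bound $J(\nu) \geq J(\mu^*) + J'_{\mu^*}(\nu-\mu^*)$ together with the representation $J'_{\mu^*}(\nu-\mu^*) = \mathbb{E}_{X\sim\nu}[h_{\mu^*}(X)] \geq 0$, and the "only if" direction uses optimality along the mixture $(1-t)\mu^* + t\delta_x$ and passes to the limit (the paper phrases this as a contradiction, you argue it directly — an immaterial difference). Your added justification of the convexity inequality via monotonicity of the difference quotient of $t \mapsto J((1-t)\mu^* + t\nu)$ is a detail the paper leaves implicit, and it is sound.
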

\begin{proof}{Proof.} Suppose that $h_{\mu^*}$ is non-negative. Since $J$ is convex, we see that \begin{align*} J(\nu) &\geq J(\mu^*) + J'_{\mu^*}(\nu) \\ &= J(\mu^*) + \int_{\mathcal{X}} h_{\mu^*}(x) \, \nu( \mathrm{d}x) \geq J(\mu^*),\end{align*} where the last inequality holds because $h_{\mu^*}(x) \geq 0$ for all $x$ and $\nu \in \mathcal{P}(\mathcal{X})$. Hence, $\mu^*$ is optimal. 

Now, let's prove the converse statement. Let $\mu^*$ be optimal. If there exists $x_0 \in \mathcal{X}$ such that $h_{\mu^*}(x_0)<0$, then \begin{equation*}
    0>h_{\mu^*}(x_0) =\lim_{t \to 0^+}\frac{1}{t} \bigg\{ J(\mu^* + t(\delta_{x_0} - \mu^*)) - J(\mu^*) \bigg\} \geq 0,
\end{equation*} where the last inequality follows from $J(\mu^* + t(\delta_{x_0} - \mu^*)) \geq J(\mu^*)$ for all $t \in [0,1]$. Thus, we obtain a contradiction. \demo
\end{proof}

The next lemma is intended to shed light on the sparsity of a solution to~\eqref{mainopt}. In particular, the lemma exposes a connection between the nature of the set of zeros of the influence function at an optimal point, and the support of the optimal point. See especially Example 4.2 in Section~\ref{sec:examples} for more insight on how sparsity emerges.

\begin{lemma}[Support of Optimal Measure]\label{lem:supportopt} Suppose $\mu^*$ is optimal and the von Mises derivative exists at $\mu^*$. The support of $\mu^*$ satisfies \begin{equation}\label{suppinfl}
    \mathrm{supp}(\mu^*) \subseteq \left\{ x \in \mathcal{X}\,:\,h_{\mu^*}(x)=0 \right\} \quad \mu^* \mbox{ a.s.\ }
\end{equation} Moreover, if $h_{\mu^*}$ is differentiable, we have  \begin{equation}\label{suppdinfl}
    \mathrm{supp}(\mu^*) \subseteq \left\{ x \in \mathcal{X}\,:\,\nabla h_{\mu^*}(x)=0 \right\} \quad \mu^* \mbox{ a.s.\ }
\end{equation}
\end{lemma}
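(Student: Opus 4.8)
The plan is to convert optimality of $\mu^*$ into two facts about the influence function --- global non-negativity on $\mathcal{X}$ and a vanishing $\mu^*$-average --- and then exploit the elementary measure-theoretic fact that a non-negative function whose $\mu^*$-integral is zero must vanish $\mu^*$-almost everywhere. In spirit this is a complementary-slackness argument: non-negativity of $h_{\mu^*}$ plays the role of dual feasibility, the zero average plays the role of the complementarity condition, and together they pin the support to the zero set.

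First I would record the first-order condition. Since $\mu^*$ is optimal, for every $x \in \mathcal{X}$ the feasible curve $t \mapsto \mu^* + t(\delta_x - \mu^*)$, $t \in [0,1]$, satisfies $J(\mu^* + t(\delta_x - \mu^*)) \geq J(\mu^*)$; dividing by $t>0$ and letting $t \to 0^+$ yields $h_{\mu^*}(x) \geq 0$ for all $x \in \mathcal{X}$. This is exactly the argument used for the converse direction of Lemma~\ref{lem:optCondition}, and I would emphasize that it does not require convexity, only optimality. Next I would obtain the normalization $\int_{\mathcal{X}} h_{\mu^*}(x)\,\mu^*(\mathrm{d}x)=0$ by taking $\nu = \mu^*$ in the identity~\eqref{central}, namely $J'_{\mu^*}(\nu - \mu^*) = \mathbb{E}_{X \sim \nu}[h_{\mu^*}(X)]$: with $\nu = \mu^*$ the left-hand side is the right-derivative of $J$ along the zero direction, which is $0$, so the right-hand side $\int h_{\mu^*}\,\mathrm{d}\mu^*$ vanishes.

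With $h_{\mu^*} \geq 0$ on $\mathcal{X}$ and $\int h_{\mu^*}\,\mathrm{d}\mu^* = 0$, the integrand vanishes $\mu^*$-almost everywhere, so $\mu^*(\{x : h_{\mu^*}(x) \neq 0\}) = 0$. This is precisely the inclusion $\mathrm{supp}(\mu^*) \subseteq \{x : h_{\mu^*}(x) = 0\}$ in the $\mu^*$-a.s.\ sense, since $\mu^*(\mathrm{supp}(\mu^*)) = 1$ forces $\mu^*$-almost every support point into the zero set. For the gradient claim I would observe that, because $h_{\mu^*}$ is non-negative on $\mathcal{X}$, every point of the zero set is a global minimizer of $h_{\mu^*}$ over $\mathcal{X}$; at such a minimizer lying in $\mathrm{int}(\mathcal{X})$, differentiability forces the interior stationarity condition $\nabla h_{\mu^*}(x) = 0$. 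Combining this with the $\mu^*$-a.s.\ containment from the first part gives $\nabla h_{\mu^*} = 0$ $\mu^*$-almost everywhere on the support.

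I expect the main obstacle to be the boundary behaviour in the gradient statement: the stationarity conclusion $\nabla h_{\mu^*}(x)=0$ is only automatic at minimizers interior to $\mathcal{X}$, whereas points of $\partial\mathcal{X}$ may be minimizers with non-vanishing gradient. The clean route is to read both conclusions modulo the $\mu^*$-null set (the ``$\mu^*$ a.s.'' qualifier), and to note that the first inclusion is really the $\mu^*$-a.e.\ statement $h_{\mu^*}=0$, which sidesteps any need to argue that $\{h_{\mu^*}=0\}$ is closed; continuity of $h_{\mu^*}$, available via Lemma~\ref{lem:infcont}, would upgrade this to the literal set inclusion but is not needed here. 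A secondary point to handle carefully is the justification of $\int h_{\mu^*}\,\mathrm{d}\mu^*=0$, which relies on the linearity of the von Mises derivative guaranteed by the standing hypothesis that it exists at $\mu^*$.
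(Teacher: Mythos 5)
Your proof is correct and follows essentially the same route as the paper's: nonnegativity of $h_{\mu^*}$ from optimality (the paper invokes the corresponding direction of Lemma~\ref{lem:optCondition}, which, as you note, does not use convexity), the zero-mean identity $\int_{\mathcal{X}} h_{\mu^*}\,\mathrm{d}\mu^* = 0$ from~\eqref{central}, the ``nonnegative with zero integral implies vanishing $\mu^*$-a.e.'' step (which the paper phrases as a contradiction on a set $A \subseteq \mathrm{supp}(\mu^*)$ where $h_{\mu^*}>0$), and finally the observation that zeros of $h_{\mu^*}$ are global minimizers, giving the gradient claim under differentiability. The one point where you are more careful than the paper: the paper asserts $h_{\mu^*}(x)=0 \Rightarrow \nabla h_{\mu^*}(x)=0$ with no interiority qualification, whereas your boundary caveat is genuine --- e.g.\ on $\mathcal{X}=[0,1]$ with cost $t(x,y)=x$ in Example 4.2(a) one gets $\mu^*=\delta_0$ and $h_{\mu^*}(x)=x$, so the support point $0$ is a global minimizer with $\nabla h_{\mu^*}(0)=1\neq 0$, meaning~\eqref{suppdinfl} really does require interior support (or a one-sided reading of stationarity), a gap your restriction to $\mathrm{int}(\mathcal{X})$ correctly isolates and the paper's proof silently passes over.
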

\begin{proof}{Proof.} Since $\mu^*$ is optimal, as per Lemma~\ref{lem:optCondition} \begin{equation*}
    h_{\mu^*}(x) \geq 0 \qquad \forall x \in \mathcal{X}.
\end{equation*} Assume there exists a nonempty set $A\subseteq \mathrm{supp}(\mu^*)$ such that $h_{\mu^*}(x)>0$ for all $x \in A$. From~\eqref{central}, we know $\mathbb{E}_{X \sim \mu^*}[h_{\mu^*}(X)]=0$. Then we have the contradiction
\begin{equation*}
    0=\int_{\mathcal{X}} h_{\mu^*}(x) \, \mu^*( \mathrm{d}x) \geq \int_{A} h_{\mu^*}(x) \, \mu^*( \mathrm{d}x) > 0.
\end{equation*} Therefore, $\mathrm{supp}(\mu^*) \subseteq \left\{ x \in \mathcal{X}\,:\,h_{\mu^*}(x)=0 \right\}$. Any $x \in \mathcal{X}$ satisfying $h_{\mu^*}(x)=0$ is a minimum of $h_{\mu^*}$. Hence, if $h_{\mu^*}$ is differentiable, $h_{\mu^*}(x)=0$ implies $\nabla h_{\mu^*}(x)=0$, validating the assertion in~\eqref{suppdinfl}. \demo
\end{proof} 

Lemma~\ref{lem:supportopt} implies that if the set $\left\{ x \in \mathcal{X}\,:\,h_{\mu^*}(x)=0 \right\}$ or $\left\{ x \in \mathcal{X}\,:\,\nabla h_{\mu^*}(x)=0 \right\}$ is ``sparse," meaning that it is a countable set, then the optimal solution $\mu^*$ is also sparse. Conversely, if an optimal measure $\mu^*$ to problem~\eqref{mainopt} is supported on a set $A$, then $A$ is a subset of $\left\{ x \in \mathcal{X}\,:\,h_{\mu^*}(x)=0 \right\}$ or $\left\{ x \in \mathcal{X}\,:\,\nabla h_{\mu^*}(x)=0 \right\}$. These observations are not conclusive about the nature of the support of $\mu^*$ but they nevertheless offer insight.

The next result provides sufficient conditions under which the influence function exists. In particular, it asserts that if $J$, extended to the vector space $\mathcal{M}(\mathcal{X})$ of signed measures, is convex, then the influence function necessarily exists. The result is stated with $J$ extended to $\mathcal{M}(\mathcal{X})$ since the space $\mathcal{P}(\mathcal{X})$ has no interior. We include a proof, but it follows by retracing the classic proof of showing that a convex function (with domain in $\mathbb{R}^d$) has a directional derivative~\cite{2004nes}.

\begin{lemma}[Influence Function Existence]\label{lem:infexists} Suppose $J: \mathcal{M}(\mathcal{X}) \to \mathbb{R}$ is convex. Then, the influence function $h_{\mu}$ given by~\eqref{influence} exists and is finite at each $\mu \in \mathcal{M}(\mathcal{X})$.
\end{lemma}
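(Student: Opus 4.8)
The plan is to reduce the infinite-dimensional claim to the classical fact that a finite convex function on an open interval of $\mathbb{R}$ possesses finite one-sided derivatives. Fix $\mu \in \mathcal{M}(\mathcal{X})$ and $x \in \mathcal{X}$, and consider the one-variable restriction of $J$ along the ``direction'' $\delta_x - \mu$, namely $g(t) := J(\mu + t(\delta_x - \mu))$. The crucial structural point — and precisely the reason the lemma is stated on $\mathcal{M}(\mathcal{X})$ rather than on $\mathcal{P}(\mathcal{X})$ — is that $\mathcal{M}(\mathcal{X})$ is a vector space, so the affine path $t \mapsto \mu + t(\delta_x - \mu)$ lies in $\mathcal{M}(\mathcal{X})$ for \emph{every} $t \in \mathbb{R}$, not merely for $t \in [0,1]$. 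Since $J$ is convex on $\mathcal{M}(\mathcal{X})$ and the path is affine, $g$ is a finite-valued convex function on all of $\mathbb{R}$ (finiteness because $J$ is real-valued by hypothesis).

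Next I would invoke the three-slopes (monotonicity of difference quotients) property of scalar convex functions: for convex $g:\mathbb{R}\to\mathbb{R}$, the map $t \mapsto (g(t)-g(0))/t$ is non-decreasing on $\mathbb{R}\setminus\{0\}$. Consequently the right-hand difference quotient $(g(t)-g(0))/t$ is non-increasing as $t \downarrow 0$ and hence converges to its infimum over $t>0$. By definition this limit is the right derivative $g'_+(0)$, and comparing with \eqref{influence} it equals $h_\mu(x)$. Existence of the defining limit in \eqref{influence} is therefore immediate from monotonicity alone.

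The one point requiring care — and essentially the only obstacle — is finiteness, i.e.\ ruling out the value $-\infty$. Here the two-sided availability of the affine path is decisive. Applying the monotonicity of the difference quotient with any fixed $s<0<t$ gives $(g(s)-g(0))/s \le (g(t)-g(0))/t$, so the right-hand quotients are bounded below by the finite number $(g(-1)-g(0))/(-1)$. Combined with the non-increasing convergence from the previous step, this pins the limit strictly between $(g(-1)-g(0))/(-1)$ and $(g(1)-g(0))/1$, both finite, so $h_\mu(x) = g'_+(0)$ is finite. Everything else is the standard convex-analysis argument (as in~\cite{2004nes}) transcribed verbatim to the signed-measure setting; the only genuinely measure-theoretic input is the observation that passing to $\mathcal{M}(\mathcal{X})$ supplies the negative-$t$ portion of the line needed for the lower bound.
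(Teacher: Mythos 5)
Your proposal is correct and follows essentially the same route as the paper's proof: both arguments restrict $J$ to the line through $\mu$ in the direction $\delta_x-\mu$, use convexity to establish monotonicity of the difference quotients (the paper's inequality~\eqref{dirderpr} is exactly your three-slopes property, derived inline), and obtain finiteness from a lower bound supplied by the negative-$t$ portion of the line (the paper's~\eqref{dirderpr2} with $\mu - t_0(\delta_x-\mu)$ is your quotient at $t=-1$), which is precisely where the vector-space structure of $\mathcal{M}(\mathcal{X})$ is needed. No gaps; the only difference is presentational, in that you cite the scalar convexity facts rather than re-deriving them.
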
 
\begin{proof}{Proof.} {Consider the function
\begin{equation}
    s(t) := \frac{1}{t} \bigg\{J((1-t)\mu + t \delta_x) - J(\mu) \bigg\}, \quad 0 < t \leq 1.
\end{equation}}Since $J$ is convex, we see that for $0 < \beta, t \le 1$, $$J\left((1-\beta)\mu + \beta((1-t)\mu + t\delta_x)\right) \leq (1-\beta)J(\mu) + \beta J((1-t)\mu + t\delta_x),$$
i.e., that
$$J\left((1-\beta t)\mu + \beta t\delta_x\right) \leq (1-\beta)J(\mu) + \beta J((1-t)\mu + t\delta_x),$$
and so, rearranging,
\begin{align} \label{dirderpr}
s(\beta t) = \frac{1}{\beta t} \bigg\{J((1-\beta t)\mu + \beta t \delta_x) - J(\mu) \bigg\} \leq \frac{1}{t}\bigg\{ J((1-t)\mu + t\delta_x) - J(\mu) \bigg\}.
\end{align}
We see from~\eqref{dirderpr} that $s$ is non-decreasing to the right of zero. 
%We also know that for any $t_0 > 0$ and all $\beta,t \in (0, 1]$,
% \begin{align} \label{dirderpr2}
% s(\beta t) \geq \frac{1}{t_0}\bigg\{ J(\mu) - J(\mu - t_0 (\delta_x - \mu)) \bigg\}. \end{align}
% Conclude from~\eqref{dirderpr} and~\eqref{dirderpr2} that $\lim_{t \to 0^+} s(t)$ exists and hence the assertion holds. 

{ Next, consider \(t_0 > 0\). Since \(\mathcal{M}(\mathcal{X})\) is a vector space, it follows that \(\mu - t_0 (\delta_x - \mu) \in \mathcal{M}(\mathcal{X})\). Furthermore, for any \(t_0 > 0\), we can express \(\mu\) as a convex combination:
\[
\mu = \frac{t_0}{t + t_0} (\mu + t (\delta_x - \mu)) + \frac{t}{t + t_0} (\mu - t_0 (\delta_x - \mu)).
\]
Using the convexity of \(J\), this yields
\[
J(\mu) \leq \frac{t_0}{t + t_0} J(\mu + t (\delta_x - \mu)) + \frac{t}{t + t_0} J(\mu - t_0 (\delta_x - \mu)).
\]
Dividing through by \(\frac{t t_0}{t + t_0}\), we obtain
\begin{equation}
    \left(\frac{1}{t} + \frac{1}{t_0}\right) J(\mu) \leq \frac{1}{t} J(\mu + t (\delta_x - \mu)) + \frac{1}{t_0} J(\mu - t_0 (\delta_x - \mu)).
\end{equation}
This inequality provides a lower bound for \(s(t)\):
\begin{align} \label{dirderpr2}
    s(t) \geq \frac{1}{t_0} \bigg\{J(\mu) - J(\mu - t_0 (\delta_x - \mu)) \bigg\}.
\end{align} Conclude from~\eqref{dirderpr} and~\eqref{dirderpr2} that $\lim_{t \to 0^+} s(t)$ exists and hence the assertion holds.}
\demo    
\end{proof}

The next lemma provides some sufficient conditions under which the influence function $h_{\mu}$ is continuous on the set $\mathcal{X}$, with the implication that $h_{\mu}$ attains its minimum on $\mathcal{X}$.  

\begin{lemma}[Influence Function Continuity]\label{lem:infcont}
Suppose the influence function $h_{\mu}$ of $J: \mathcal{P}(\mathcal{X}) \to \mathbb{R}$ exists at each $\mu \in \mathcal{P}(\mathcal{X})$. Suppose also that $J$ satisfies the following ``steepness'' restriction: for each fixed $\mu \in \mathcal{P}(\mathcal{X})$, each fixed $x \in \mathcal{X}$, and small enough $t>0$, \begin{equation}\label{JLiplike} \bigg|J((1-t)\mu + t\delta_x) - J((1-t)\mu + t\delta_y) \bigg| \leq t\, o(\|x - y\|), \quad y \in \mathcal{X}.\end{equation} Then the influence function $h_{\mu}$ of $J$ is continuous on $\mathcal{X}$.
\end{lemma}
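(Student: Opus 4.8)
The plan is to read continuity off the definition~\eqref{influence} of $h_\mu$ directly, exploiting the standing hypothesis that the influence function exists at every $\mu$ and every evaluation point. Fix $\mu \in \mathcal{P}(\mathcal{X})$ and take two points $x, y \in \mathcal{X}$; the goal is to control $|h_\mu(x) - h_\mu(y)|$ in terms of $\|x-y\|$. The first step is to combine the two defining limits into one. Writing $\mu + t(\delta_x - \mu) = (1-t)\mu + t\delta_x$, both
\[
h_\mu(x) = \lim_{t\to0^+}\tfrac{1}{t}\big\{J((1-t)\mu+t\delta_x) - J(\mu)\big\}, \quad
h_\mu(y) = \lim_{t\to0^+}\tfrac{1}{t}\big\{J((1-t)\mu+t\delta_y) - J(\mu)\big\}
\]
exist by assumption, so the difference of the limits is the limit of the difference, and the common $J(\mu)$ term cancels:
\[
h_\mu(x) - h_\mu(y) = \lim_{t\to0^+}\tfrac{1}{t}\big\{J((1-t)\mu+t\delta_x) - J((1-t)\mu+t\delta_y)\big\}.
\]

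Next I would invoke the steepness restriction~\eqref{JLiplike}. For every small enough $t>0$ it bounds the expression inside the braces by $t\,o(\|x-y\|)$, so the prefactor $1/t$ cancels exactly and
\[
\Big|\tfrac{1}{t}\big\{J((1-t)\mu+t\delta_x) - J((1-t)\mu+t\delta_y)\big\}\Big| \le o(\|x-y\|),
\]
with a right-hand side independent of $t$. Passing to the limit $t\to0^+$ then yields the clean estimate $|h_\mu(x) - h_\mu(y)| \le o(\|x-y\|)$.

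Finally, I would conclude continuity: since the right-hand side tends to $0$ as $\|x-y\|\to0$, letting $y\to x$ forces $h_\mu(y)\to h_\mu(x)$, and as $x$ was arbitrary this establishes continuity of $h_\mu$ on $\mathcal{X}$. The argument is short and I anticipate no genuine obstacle; the only point needing care is the very first step—combining the two one-sided limits into a single limit, which is legitimate \emph{precisely because} each is assumed to exist—after which the $t$ in the steepness bound cancels the $1/t$ in the difference quotient and hands over the estimate for free. I would additionally note that the bound is in fact uniform in $x,y$, so $h_\mu$ is uniformly continuous; combined with compactness of $\mathcal{X}$, this gives the downstream consequence that $h_\mu$ attains its minimum on $\mathcal{X}$.
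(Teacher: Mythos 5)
Your proof is correct and follows essentially the same route as the paper's: both combine the two defining limits so the common $J(\mu)$ term cancels, apply the steepness bound~\eqref{JLiplike} so that the $1/t$ prefactor cancels, and conclude $|h_{\mu}(x)-h_{\mu}(y)| \leq o(\|x-y\|)$, the paper merely phrasing the limit $y \to x$ via an arbitrary sequence $x_n \to x$. One small caveat on your closing remark: since~\eqref{JLiplike} is stated for each \emph{fixed} $x$, the $o(\cdot)$ bound need not be uniform in $x$, so uniform continuity does not follow directly from your estimate --- though it does follow anyway from continuity together with compactness of $\mathcal{X}$, and the attainment of the minimum is likewise valid.
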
 \begin{proof}{Proof.} Consider any point $x \in \mathcal{X}$, and let $\{x_n, n \geq 1\}$ be any sequence in $\mathcal{X}$ such that $x_n \to x$. Notice that \begin{align}\label{hcont} \left|h(x_n) - h(x) \right| &= \left| \left\{\lim_{t \to 0^+} \frac{1}{t}\left(J((1-t)\mu + t\delta_{x_n}) - J(\mu)\right)\right\} - \left\{\lim_{t \to 0^+} \frac{1}{t}\left(J((1-t)\mu + t\delta_{x}) - J(\mu)\right)\right\} \right| \nonumber \\
&= \left|\lim_{t \to 0^+} \frac{1}{t}\left(J((1-t)\mu + t\delta_{x_n}) - J((1-t)\mu + t\delta_{x})\right)\right| \nonumber \\ &\leq o(\|x_n - x\|),
\end{align} where the inequality follows from~\eqref{JLiplike}. Since the sequence $\{x_n, n \geq 1\}$ is arbitrary, conclude from~\eqref{hcont} that $h_{\mu}$ is continuous at $x$. \demo
\end{proof}

As we see later (in Section~\ref{sec:dfw}), one of the key aspects of this paper is a recursive algorithm that updates the incumbent solution during each iteration using a Dirac measure located at a minimum of the influence function. Lemma~\ref{lem:infcont} is intended to provide some insight (through the application of the Bolzano-Weierstrass theorem~\cite{1976bar}) on the conditions under which such a minimum is guaranteed to exist.

%\begin{theorem}[Riesz Representation Theorem]\label{riesz} Let $C(X,\mathbb{R})$ be the space of continuous real-valued functions on $\mathcal{X}$ and $w: C(X,\mathbb{R}) \to \mathbb{R}$ a functional such that: \begin{enumerate} \item[(a)] $w$ is \emph{linear}, that is, for $\lambda_1, \lambda_2 \in \mathbb{R}$ and $f_1,f_2 \in C(X,\mathbb{R})$, $w(\lambda_1f_1 + \lambda_2f_2) = \lambda_1w(f_1) + \lambda_2w_2(f_2);$ \item[(b)] $w$ is \emph{bounded}, that is, for all $f \in C(X, \mathbb{R}), |w(f)| \leq \|f\|_{\infty}$; \item[(c)] $w$ is non-negative, that is, if $f \geq 0$, then $w(f) \geq 0$; and \item[(d)] $w$ is normalized, that is, $w(1) = 1.$ \end{enumerate} Then, there exists a unique measure $\mu$ on $(X, \mathcal{B}(X))$ such that $$w(f) = \int f \, d\mu.$$
%\end{theorem}

\section{EXAMPLES.}\label{sec:examples} 

To further intuition, we now provide a number of examples subsumed by~\eqref{mainopt} or~\eqref{smainopt}. These examples illustrate problems that (i) may or may not have the CLLF structure; (ii) have solutions that may be sparse or non-sparse; (iii) have influence functions that are calculable, but not necessarily observable through a deterministic oracle; and (iv) have influence functions observable through an unbiased stochastic oracle. %The final example in the section is included to illustrate a setting where the influence function of $J$ does not exist.

{Example~\ref{sec:pmeans} (the P-means problem) provides a meaningful case where the objective is convex but does not exhibit the CLLF structure, while the influence function remains accessible. This highlights the broader applicability of our approach beyond CLLF settings. Additionally, we have verified that under certain conditions, Examples~\ref{sec:Calibration}, \ref{sec:optRes}, \ref{sec:pmeans}, \ref{sec:networks}, and \ref{sec:deconvolution} satisfy the \(L\)-smoothness assumption introduced in Definition~\ref{def:Lsmootn}. For clarity, we explicitly state the additional regularity conditions required for \(L\)-smoothness at the end of these examples.}

{We begin by introducing two stylized examples (Sections~\ref{sec:Calibration} and \ref{sec:optRes}) to illustrate the concept of influence functions. While these problems can be solved directly, they serve as a foundation for understanding the methodology before moving to more complex settings.}

\subsection{Calibration.}\label{sec:Calibration} Consider the question of identifying a probability measure $\mu$ that makes the expected cost of a random variable (distributed as $\mu$) as close to a specified target $y_0 \in \mathbb{R}$ as possible. Formally,
\begin{align}\label{matching} \mbox{min.} & \quad J(\mu) := \left(\int_{\mathcal{X}} f(x)\, \mu(\mathrm{d}x) - y_0 \right)^2  \nonumber \\ \mbox{s.t.} & \quad \mu \in \mathcal{P}(\mathcal{X}),
\end{align} where $y_0 \in \mathbb{R}$ is a real-valued constant, $\mathcal{X} : [a,b]$ is a compact interval of $\mathbb{R}$, and $f: \mathbb{R} \to \mathbb{R}$ is continuous on $\mathcal{X}$. ($f$ attains its maximum and its minimum on $\mathcal{X}$ since $f$ is continuous and $\mathcal{X}$ is compact.) Assume also that $\min_{x \in \mathcal{X}} f(x) \leq y_0 \leq \max_{x \in \mathcal{X}} f(x)$. We can show after some algebra that 
\begin{align*} J'_{\mu}(\nu) &= 2  \left( \int f \mathrm{d}(u- \mu)\right)\left( \int f \mathrm{d}\mu - y_0\right), 
\end{align*}
and the influence function
\begin{align}\label{calibinf}
h_{\mu}(x) &= 2  \left( f(x) - \int f \mathrm{d}\mu \right)\left( \int f \mathrm{d}\mu - y_0\right) \nonumber \\
&= 2\left( \int f \mathrm{d}\mu - y_0\right)f(x) - 2\int f \mathrm{d}\mu \left( \int f \mathrm{d}\mu - y_0\right).
\end{align} Notice that the influence function $h_{\mu}$ in~\eqref{calibinf} has the simple form $$h_{\mu}(x) = c_1(\mu)f(x) + c_2(\mu),$$ with the implication that
\begin{equation} \argmin_{x \in \mathcal{X}}\, h_{\mu}(x) = \begin{cases} \argmin_{x \in \mathcal{X}} f(x), & \int f \mathrm{d}\mu - y_0 > 0; \\
\argmax_{x \in \mathcal{X}} f(x), & \int f \mathrm{d}\mu - y_0 < 0; \\ 
\mathcal{X}, & \mbox{otherwise.}\end{cases}
\end{equation} Conclude that $J^* = \min_{\mu \in \mathcal{P}(\mathcal{X})} J(\mu) =0$ and this minimum value is attained at $$\mu^* = p \delta_{\argmax_{x \in \mathcal{X}} f(x)} + (1-p) \delta_{\argmin_{x \in \mathcal{X}} f(x)},$$ where $p$ is such that $p\max_{x \in \mathcal{X}} f(x) + (1-p) \min_{x \in \mathcal{X}} f(x) = y_0.$ This optimal solution is sparse since it is a mixture of two point probability masses. {Furthermore, we can confirm that the \( L \)-smoothness assumption in Definition~\ref{def:Lsmootn} holds in this example when \( f \) is bounded on \( \mathcal{X} \). Given the quadratic structure of \( J \) and the corresponding influence function, the \( L \)-smoothness condition can be directly verified using Definition~\ref{def:Lsmootn}. For brevity, we omit the proof of this verification.} \hfill \demo

\subsection{Optimal Response Time.}\label{sec:optRes} Our next example consists of two parts (a) and (b), {the first of which illustrates a seemingly common setting where the influence function $h_{\mu}(x)$ of the objective function $J$ in problem~\eqref{mainopt} is constant with respect to the decision variable $\mu$ in the term where $x$ is appearing.}
% the first of which illustrates a seemingly common setting where the influence function $h_{\mu}$ of the objective $J$ in~\eqref{mainopt} is constant in the decision variable $\mu$. 
In such a case, a solution $\mu^*$ to~\eqref{mainopt} simply puts all its mass in the set $\mathcal{X}_{\mu}^* = \mathcal{X}^*:= \arg\min_{x \in \mathcal{X}} h_{\mu}(x)$, assuming that this set is non-empty. This immediately leads to a sparse solution since $\mu^*$ can be set to a point mass on any element of $\mathcal{X}^*$. In part (b), a slight variation illustrates a setting where the influence function $h_{\mu}$ is not constant in $\mu$, implying that $\mathcal{X}_{\mu}^*$ retains its dependence on $\mu$. More importantly, it easily yields a solution $\mu^*$ that is non-sparse.

\textbf{Part (a):} Consider a ``one-dimensional compact city'' $\mathcal{X}:=[a,b]$ where incidents occur according to a probability measure $\eta \in \mathcal{P}(\mathcal{X}).$ We would like to locate an emergency response vehicle on $\mathcal{X}$ according to the measure $\mu$ in such a way that the ``cost'' (defined appropriately) due to attending to the next incident is minimized. In a simple formulation of this problem, we wish to solve:
\begin{align}\label{optresp} \mbox{min.} & \quad J(\mu) := \int_{\mathcal{X}} c_{\mu}(y) \, \mathrm{d}\eta(y) \nonumber \\ \mbox{s.t.} & \quad \mu \in \mathcal{P}(\mathcal{X}), \end{align}
where \begin{equation} \label{costex} c_{\mu}(y) := \int_{\mathcal{X}} t(x,y) \, \rmd\mu(x), \quad y \in \mathcal{X} \end{equation} and $t: \mathcal{X} \times \mathcal{X} \to [0,\infty)$ represents the cost of a response from $x$ to $y$, $t(\cdot,y)$ continuous for each $y \in \mathcal{X}$. Under the cost structure in~\eqref{costex}, we get \begin{align}\label{optcalc}
\lim_{\epsilon\to0^+}\frac{1}{\epsilon} \left\{J( (1-\epsilon)\mu + \epsilon \nu) - J(\mu) \right\} &= \int \lim_{\epsilon\to0^+}\frac{1}{\epsilon} \left\{\int t(x,y) \, \mathop{\rmd((1-\epsilon)\mu + \epsilon \nu)(x)} - \int t(x,y) \, \mathop{\rmd \mu(x)}\right\} \, \mathop{\rmd\eta}(y) \nonumber \\
&= \int \int t(x,y) \, \mathop{\rmd}(\nu-\mu)(x) \, \rmd \eta(y) \nonumber \\
&= \mathbb{E}_Y\bigg[\mathbb{E}_{X \sim \nu}[t(X,Y)] -\mathbb{E}_{X \sim \mu}[t(X,Y)]\bigg].
\end{align}
Using the expression in~\eqref{optcalc}, we see that the influence function is \begin{equation}\label{infresp}h_{\mu}(x) = \mathbb{E}_Y[t(x,Y)] - \mathbb{E}_{Y}\left[\mathbb{E}_{X \sim \mu}[t(X,Y)]\right],\end{equation}
Since the dependence of $\mu$ appears as an additive function, we see that $\mathcal{X}^*_{\mu}$ does not depend on $\mu$. Since $t(\cdot,Y)$ is continuous $Y$-almost surely and $\mathcal{X}$ is compact, the set $\mathcal{X}^* := \arg\min_{x \in \mathcal{X}} h_{\mu}(x)$ is non-empty. Let $\mu^*$ be a measure supported on $\mathcal{X}^*$. Then, since $J$ in~\eqref{optresp} is linear (and hence convex) in $\mu$, we see that for any $\nu \in \mathcal{P}(\mathcal{X})$,
\begin{align*}
J(\nu) &= J(\mu^*) + \int h_{\mu^*}(x) \, \mathop{\rmd}(\nu - \mu^*)(x) \\
& \geq 
J(\mu^*) + \int_{\mathcal{X}} \min_{y \in \mathcal{X}} h_{\mu^*}(y) \, \mathop{\rmd}\nu(x) - \int_{\mathcal{X}^*} h_{\mu^*}(x) \mathop{\rmd}\mu^*(x) \\
& = J(\mu^*),\end{align*} implying that $\mu^*$ is optimal. {Moreover, due to the linear structure of the influence function \( h_{\mu} \), a sufficient condition for \( J \) to satisfy the \( L \)-smoothness assumption is that \( t \) remains bounded.}

\textbf{Part (b)}: Consider now the following simple variation. Suppose $F_{\mu}(t), t \in [0,\infty)$ represents the ``probability of the response time to a random incident being at most $t$,'' assuming the response vehicle location $x \sim \mu$ and incident  location $Y \sim \eta$ are independent, and that the response vehicle moves at constant speed $v$:
\begin{equation}
F_{\mu}(t) = \int \int I(|x-y| \leq vt)\, \mathop{\rmd}\mu(x) \mathop{\rmd}\eta(y).
\end{equation}
Suppose now that we seek a measure $\mu$ that makes the resulting $F_{\mu}$ ``closest'' to a target profile curve $F^*(t), t \in [0,\infty)$ in squared error, that is, we seek to solve:
\begin{align}\label{optrespb}
\mbox{min.} & \quad J(\mu) := \int_0^{\infty} (F_{\mu}(t) - F^*(t))^2\, \mathop{\rmd}t \nonumber \\
\mbox{s.t.} & \quad \mu \in \mathcal{P}(\mathcal{X}).
\end{align}
Algebra similar to that used in the previous part then yields that
\begin{equation}\label{infrespb} h_{\mu}(x) = 2 \int_0^{\infty} \left( F_{\mu}(t) - F^*(t)\right) \, [P\left(\left| Y - x \right| \leq vt \right) - P\left(\left| Y - X \right| \leq vt \right)]\, \mathop{\rmd}t,
\end{equation}
where the incident location $Y \sim \eta$ and the volunteer location $X \sim \mu$. Unlike in part (a), the set $\mathcal{X}_{\mu}^* := \arg\min h_{\mu}(x)$ is intimately dependent on $\mu$, and perhaps more importantly, even simple choices of $\eta$ and $F^*$ can yield non-sparse solutions $\mu^*$. For example, suppose $v=1$, $\mathcal{X} = [0,1]$ and $$\eta = \delta_{\frac{1}{2}}; \quad  F^*(t) = \begin{cases} 2t & 0 \leq t \leq \frac{1}{2} \\ 1 & t > \frac{1}{2}. \end{cases} $$ Then simple calculations yield that $F_{\mu^*}(t) = F^*(t)$ for all $t \in [0,\infty)$ and $h_{\mu^*}(x) = 0$ if $\mu^* = \mbox{Unif}(0,1).$ In fact, we can show that any finitely supported sparse solution has to be sub-optimal. Consider any finitely supported sparse solution $\tilde{\mu} = \sum_{i=1}^n p_i \delta_{x_i}$, where $p_i>0, \sum_{i=1}^n p_i =1$, $x_i \in [0,1], i=1,2,\ldots, n$ are distinct, and $0 \leq x_1 < x_2 < \cdots < x_n \le 1$ without loss of generality. If $x_j \ne 1/2$ for all $j$ then $F_{\tilde \mu}(t) = 0$ for $0 \le t < \min\{|x_j-1/2|, j = 1, 2, \ldots, n\}$. Otherwise, if $x_{j^*} = 1/2$, then $F_{\tilde \mu}(t) = p_{j^*}$ for $0 \le t < \min\{|x_j - 1/2|, j=1,2,\ldots,n, j \ne j^*\}$.
We then see that $J(\tilde{\mu}) = \int_0^{\infty} (F_{\tilde{\mu}}(t) - F^*(t))^2 \, \mathop{\rmd}t >0$ implying that $\tilde{\mu}$ is sub-optimal. { In addition, we can verify that a sufficient condition for \( J \) to be \( L \)-smooth is the boundedness of \( F^* \).}\hfill \demo

\subsection{Optimal Experimental Design} Consider the question how best to sample points from a space $\mathcal{X} \subseteq \mathbb{R}^d$ when estimating the parameter vector $\beta^* \in \mathbb{R}^d$ of a \emph{regression model} having the form
\begin{equation}\label{regression} Y(X) = f(X)^T\beta^* + \epsilon(X).
\end{equation}
In~\eqref{regression}, $Y(X)$ is called the \emph{response} at $X$, $f = (f_1,f_2,\ldots,f_d): \mathcal{X} \to \mathbb{R}^d$  is a vector of \emph{orthogonal} real-valued functions on $\mathcal{X}$, that is, $\int_{\mathcal{X}} f_i(x)f_j(x) \, \mathrm{d}x = 0$ for $i \neq j$, and $\epsilon(X)$ satisfies $\mathbb{E}[\epsilon(X)] = 0, \mbox{var}[\epsilon(X)] = \sigma^2$. Now, suppose we observe the responses $Y_1, Y_2, \ldots, Y_n$ at the observation points $X_1,X_2, \ldots,X_n \overset{\text{\tiny{iid}}}{\sim} \mu \in \mathcal{P}(\mathcal{X})$. Let $\hat{\beta}_n$ be the least-squares estimator of $\beta^*$, that is, \begin{equation}\label{ls} \hat{\beta}_n := \arg\min \left\{\sum_{j=1}^n \left(Y_i - f(X_i)^T\beta \right)^2: \beta \in \mathbb{R}^d\right\}.\end{equation} It is known~\cite{2002molzuy} that the covariance $\mbox{cov}(\hat{\beta}_n) = \sigma^2M^{-1}(\mu)$ where
\begin{equation}\label{covmat} M(\mu) := \int_{\mathcal{X}} f(x)f(x)^T \, \mu(\mathrm{d}x)\end{equation} is called the \emph{information matrix}. Various classical experimental designs, e.g., A-optimal, E-optimal, L-optimal, D-optimal~\cite{1974kie}, seek to maximize or minimize some function of $M^{-1}(\mu)$ with respect to $\mu$ in an attempt to identify a good design. For instance, the most widely used $D$-optimal design seeks to solve:
\begin{align}\label{expdes} \mbox{min.} & \quad J(\mu) :=  \det M^{-1}(\mu)\, \nonumber \\ \mbox{s.t.} & \quad \mu \in \mathcal{P}(\mathcal{X}). \end{align} The problem in~\eqref{expdes} is indeed an optimization problem over the space of probability measures. {Since the differential of the determinant satisfies $\mathrm{d} \det(M) = \det(M) \mathrm{tr} \left\{ M^{-1} \mathrm{d}M \right\}$ by Theorem 8.1 in~\cite{magnus2019matrix}, and the von Mises derivative of \( M(\mu) \) along \( \nu - \mu \) is given by \( M(\nu) - M(\mu) \), applying the chain rule yields the von Mises derivative of \( J \):} $$J'_{\mu}(\nu) = -\left( \det M^{-1}(\mu) \right)\mathrm{tr}\bigg\{M^{-1}(\mu) (M(\nu) - M(\mu)) \bigg\},$$ implying the influence function \begin{align} h_{\mu}(x) &= -\mathrm{tr}\bigg\{ M^{-1}(\mu) f(x)f(x)^T -I_d\bigg\} \, \det M^{-1}(\mu) \nonumber \\ &= \bigg( -f(x)^T M^{-1}(\mu) f(x) + d \bigg) \, \det M^{-1}(\mu), \quad x \in \mathcal{X}.
\end{align} Through a similar procedure, influence function expressions for other classical experimental designs can be obtained. {Since \( f(y) f(y)^T \) for \( y \in \mathcal{X} \) is a rank-one matrix and therefore not invertible, the influence function becomes unbounded at \( \mu = \delta_y \), implying that the \( L \)-smoothness assumption fails in this case.
}
\demo

\subsection{{\em P}-means Problem}\label{sec:pmeans}
The $P$-means problem~\cite{2002molzuy,2000oka}
is sometimes called the \emph{randomized variant} of the $k$-means clustering problem. Suppose \emph{demand sources} located at $\ell_1,\ell_2, \ldots,\ell_{n_0} \in \mathcal{X} \subset \mathbb{R}^d$ are to be serviced by \emph{responders} located in $\mathcal{X}$, where $\mathcal{X}$ is a compact set. As part of the randomization, suppose that the responders are located in $\mathcal{X}$ according to a spatial Poisson process $X$ having mean measure $\mu$. Assume for simplicity that $\mu(\mathcal{X}) = 1$, so that $\mu \in \mathcal{P}(\mathcal{X})$. Also, assume that each demand source is serviced by the responder closest to it, that is, for a realization $(X_1, X_2, \ldots, X_N)$ of $X$, the cost incurred due to serving the $i$-th demand, $i=1,2,\ldots,n_0$, is
\begin{equation} c_i(X) = \begin{cases} \min_j \{\|\ell_i - X_j\|, j = 1,2,\ldots,N\} & N \geq 1; \\
u & \mbox{ otherwise},\end{cases}
\end{equation}
where $u = \sup\{\|x_1-x_2\|, x_1,x_2 \in \mathcal{X}\}$ is a fixed constant. (Due to the choice of the constant $u$, $c_i(X) \leq u$ if $N \geq 1$, and $c_i(X) = u$ if $N = 0$.) The $P$-means problem then seeks a $\mu \in \mathcal{P}(\mathcal{X})$ that minimizes the expected total cost \begin{align}\label{totcost_pmeans} J(\mu) &= \sum_{i=1}^{n_0} \, \int_0^{\infty} P_X\left( c_i(X) > t \right) \, \mathrm{d}t \nonumber \\
&= \sum_{i=1}^{n_0} \, \int_0^{u}  \exp\left\{-\mu(B(\ell_i,t))\right\} \, \mathrm{d}t, \quad \mu \in \mathcal{P}(\mathcal{X}).
\end{align}
Some algebra starting from first principles then gives the influence function of $J$:\begin{equation}\label{inf_pmeans} h_{\mu}(x) = -\sum_{i=1}^{n_0} \, \int_0^{u} \mathbb{I}\left( \left\|\ell_i - x \right\| \leq t \right) \, \exp\left\{-\mu(B(\ell_i,t))\right\} \, \mathrm{d}t + \sum_{i=1}^{n_0} \, \int_0^{u} \mu(B(\ell_i,t)) \, \exp\left\{-\mu(B(\ell_i,t))\right\} \, \mathrm{d}t. \end{equation} As in previous examples, notice that the second term in the influence function is a constant, that is, it does not depend on $x$. { Moreover, due to the properties of the exponential function \( \exp(-x) \) for \( 0 \leq x \leq 1 \), we can establish that \( J \) satisfies the \( L \)-smoothness assumption without imposing any additional conditions.}

{ There is some similarity of this problem to that of positioning emergency service vehicles in a city, e.g., there are similarities with the discrete optimization formulation given in \cite{daskin83}. A key difference is that in the emergency service setting, one seeks deterministic locations at which to station vehicles, so the target measure is atomic. Here we relax the atomic requirement. The present formulation appears to be more applicable to certain problems in so-called community first responder schemes, wherein one attempts to recruit volunteers across a city so as to minimize a community response time to an out-of-hospital cardiac arrest \cite{berhenjagli21}. The present approach avoids the need for discretization that was used in that paper.}
\demo

\subsection{Neural Networks with a Single Hidden Layer.}\label{sec:networks} Consider functions of the form $\hat{y}(x;\theta)=\frac{1}{N}\sum_{i=1}^N\sigma_*(x;\theta_i)$ where $N$ represents the number of hidden units, $\sigma_*:\mathbb{R}^d \times \mathbb{R}^D$ is an activation function, and $\theta_i \in \mathbb{R}^D$. The population risk is given by: \begin{equation*}
     \mathbb{E}[(y-\hat{y}(x;\theta))^2] = c_0 + \frac{2}{N}\sum_{i=1}^N V(\theta_i) + \frac{1}{N^2}\sum_{i,j=1}^N U(\theta_i,\theta_j)\,
\end{equation*} where $c_0=\mathbb{E}[y^2]$, $V(\theta)=-\mathbb{E}[y\sigma_*(x;\theta)]$, and $U(\theta_1,\theta_2)=\mathbb{E}[\sigma_*(x;\theta_1)\sigma_*(x;\theta_2)]$. It's worth noting that $U(\cdot,\cdot)$ takes on a symmetric positive semidefinite form. For large $N$, Mei et al.~\cite{2018mei} proposed replacing the empirical distribution $\frac{1}{N}\sum_{i=1}^N\delta_{\theta_i}$ with $\mu \in \mathcal{P}(\mathbb{R}^D)$ to approximate the population risk of two-layer neural networks, reformulating the problem as follows: \begin{align*}\mbox{min.} & \quad J(\mu) = c_0 + \int V(\theta)\, \mu(\mathrm{d}\theta) + \frac{1}{2}\int U(\theta_1,\theta_2)\, \mu(\mathrm{d}\theta_1) \, \mu(\mathrm{d}\theta_2) \\ \mbox{s.t.} & \quad  \mu \in \mathcal{P}(\mathcal{X}). \end{align*} 
After some computation, the influence function is derived as: \begin{equation*} h_{\mu}(\theta) = V(\theta) + \int U(\theta,\theta')\, \mu(\mathrm{d}\theta') + c,\end{equation*} where $c$ is a constant in $\mathbb{R}$. {Similar to the previous examples, the linear structure of the influence function implies that a sufficient condition for the \( L \)-smoothness assumption to hold is the boundedness of \( V \) and \( U \).} \demo

\subsection{Cumulative Residual Entropy Maximization.} Consider \begin{align}\label{cre} \mbox{min.} & \quad J(\mu) := \int_{0}^{\infty} \mu\left((\lambda,\infty)\right) \, \log \mu\left((\lambda,\infty)\right) \mathrm{d}\lambda   \nonumber \\ \mbox{s.t.} & \quad \mu \in \mathcal{P}(\mathcal{X}), \end{align}
where $\mathcal{X} := [a,b]$ is a compact interval of $\mathbb{R}^+$. The quantity $-J(\mu)$ is called the \emph{cumulative residual entropy} (CRE) associated with the measure $\mu$~\cite{2004raoetal}. By comparison, when $\mu$ has a density $g_{\mu}$ on $\mathcal{X}$, it is known that the usual \emph{differential entropy}
$$H(\mu) := -\int_{\mathcal{X}} g_{\mu}(x) \log g_{\mu}(x) \mathrm{d}x.$$  There exists a function $\phi$ such that $H(\phi(\mu))$ is related to CRE as $$H(\phi(\mu)) = \frac{-J(\mu)}{\mathbb{E}[X_{\mu}]} - \frac{1}{\mathbb{E}[X_{\mu}]}\, \log \frac{1}{\mathbb{E}[X_{\mu}]},$$ where $\mathbb{E}[X_{\mu}] = \int_{\mathcal{X}} x\mathrm{d}\mu.$ From the chain rule, we can obtain the influence function of $J$ at $\mu$: \begin{equation}\label{creIF}
    h_{\mu}(x) =  \int_{0}^{\infty}  \left(1+\log \left(\mu((\lambda,\infty))\right) \right) \left(\mathbb{I}_{(\lambda,\infty)}(x)-\mu\left((\lambda,\infty)\right) \right) \mathrm{d}\lambda.
\end{equation}
Hence, conclude that $\mu^*=\frac{1}{2} \delta_{a} + \frac{1}{2}\delta_{b}$ {when the base of the logarithm is 2. To see why, for \(\mu^* = \frac{1}{2} \delta_a + \frac{1}{2} \delta_b\),
\[
    \mu^*\left((\lambda, \infty)\right) =
    \begin{cases}
        1, & 0 \leq \lambda < a, \\
        \frac{1}{2}, & a \leq \lambda < b, \\
        0, & \lambda \geq b.
    \end{cases}
\]
For all \(x \in [a, b]\), the influence function becomes
\begin{align}
    h_{\mu^*}(x) &= \int_{[a, x)} \left(1 + \log_2 \frac{1}{2}\right) \left(1 - \frac{1}{2}\right) \mathrm{d}\lambda 
    - \int_{[x, b)} \left(1 + \log_2 \frac{1}{2}\right) \frac{1}{2} \mathrm{d}\lambda \nonumber \\
    &= 0.
\end{align}
This implies that \(h_{\mu^*}(x) = 0\) for all \(x \in [a, b]\), proving that \(\mu^*\) is optimal.} {It is important to note that the \( L \)-smoothness assumption does not hold in this example, as the influence function can be unbounded—for instance, near \( \mu = \delta_a \).} \demo

\subsection{Gaussian Deconvolution}\label{sec:deconvolution} Consider the Gaussian deconvolution model defined by \begin{equation}\label{eq:deconvolution}
    Y_i = W_i + Z_i, \quad i=1,\dots,n.
\end{equation} Here, $Y_1,\dots,Y_n$ represent corrupted observations, and the errors $Z_1,\dots,Z_n$ are independent of $W_1,W_2,\dots,W_n$. In this model, the unknown distribution of $W_i$, denoted as $\nu$ and supported on $\mathcal{X}$, is to be estimated, with $Z_i \sim N(0,\sigma^2)$, where the variance $\sigma^2$ is known. The task is to estimate $\nu$ based on the observed data $Y_1,\dots,Y_n$. The maximum-likelihood estimator (MLE) for $\nu$ is given by \begin{equation}
    \hat{\nu} = \argmax_{\mu \in \mathcal{P}(\mathcal{X})} \sum_{i=1}^n \log \left( \phi_\sigma\,*\, \mathrm{d}\mu(Y_i)\right) \quad where \quad \phi_\sigma\,*\, \mathrm{d}\mu(Y_i)=\int_{\mathcal{X}} \phi_\sigma(Y_i-t) \, \mathrm{d}\mu(t).
\end{equation} {Here $\phi_\sigma$ is the density of $Z_i$}. Therefore, the corresponding optimization problem is given by\begin{align*}\mbox{min.} & \quad J(\mu) = -\sum_{i=1}^n \log \left( \int_{\mathcal{X}} \phi_\sigma(Y_i-t) \, \mathrm{d}\mu(t)\right) \\ \mbox{s.t.} & \quad  \mu \in \mathcal{P}(\mathcal{X}). \end{align*} Then, the influence function of $J$ at $\mu$ is 
$h_{\mu}(x) = n-\sum_{i=1}^n \frac{\phi_\sigma(Y_i-x)}{\int_{\mathcal{X}} \phi_\sigma(Y_i-t) \, \mathrm{d}\mu(t)}$. {Furthermore, since the density function of the Gaussian distribution is bounded on \( \mathcal{X} \), we can prove that the \( L \)-smoothness assumption holds without requiring any additional conditions.} \hfill \demo

\section{DETERMINISTIC FW RECURSION.}\label{sec:dfw}
%\shcomment{Caroline, Pieter, Maggie and I have a finite-dimensional version of these ideas, and we use a couple of different algorithms to tackle them. So the descent part of these ideas is related, and I prefer to maintain some separation between the two parts of the project, to the extent that is possible. I'd prefer to not work towards a finite-dimensional version of the work below, for that reason. But I like the idea of working in function space here. To the extent we obtain any insights into how to run an optimization search, that is helpful. We use the Frank-Wolfe algorithm at present; see \url{http://www.optimization-online.org/DB_FILE/2019/12/7537.pdf}}

Recall that our problem of interest is  \begin{align}\label{restrictedoptagain} \mbox{min.} & \quad J(\mu) \nonumber \\ \mbox{s.t.} & \quad \mu \in \mathcal{P}(\mathcal{X}), \tag{$P$}\end{align} where $\mathcal{X}$ is a compact subset of $\mathbb{R}^d$, and $\mathcal{P}(\mathcal{X})$ is the probability space on $\mathcal{X}$, that is, the space of non-negative Borel measures $\mu$ supported on $\mathcal{X}$ such that $\mu(\mathcal{X}) = 1$. In this section, as a method to solve~\eqref{restrictedoptagain}, we present an analogue of the deterministic Frank-Wolfe (dFW) recursion~\cite{1978dunhar} (sometimes called the \emph{conditional gradient} method~\cite{2015bub}) on the probability space $\mathcal{P}(\mathcal{X})$. 

First recall the essential idea of the Frank-Wolfe recursion in $\mathbb{R}^d$, when we are minimizing a smooth function $f: \mathbb{R}^d \to \mathbb{R}$ over a compact convex set $Z \subset \mathbb{R}^d$. We begin with a feasible solution $y_0$ and proceed iteratively, by minimizing a first-order approximation to $f$ at each step, then taking a step in the direction of the minimizer of the approximation, i.e.,
\begin{equation}\label{detFWRd} y_{k+1} = y_k + \eta_k (s_k - y_k), \quad s_k : = \argmin_{s \in Z} \{f(y_k) + \nabla f(y_k)^T (s - y_k)\}, \quad k \geq 0.
\end{equation}
The recursion \eqref{detFWRd} can be simplified by ignoring constants and rearranging terms to obtain the standard form of Frank-Wolfe: \begin{equation}\label{detFWRd1} y_{k+1} = (1-\eta_k)y_k + \eta_k s_k, \quad s_k : = \argmin_{s \in Z} \{\nabla f(y_k)^T s\}, \quad k \geq 0.\end{equation} The obvious advantage of~\eqref{detFWRd1} is that the sequence $\{y_k, k \geq 0\}$ remains feasible, and that $s_k$ is obtained simply, by minimizing a linear function over the compact convex set $Z$. 

To mimic~\eqref{detFWRd1} in probability spaces, we notice that a first-order approximation to $J(\cdot)$ at $\mu_k$ is $J(u) \approx J(\mu_k) + J'_{\mu_k}(u)$, where $J'_{\mu_k}(u)$ denotes the von Mises derivative at $\mu_k$ in the direction $u - \mu_k$, suggesting the following analogue to~\eqref{detFWRd1}:
\begin{equation}\label{dFW0} \mu_{k+1} = (1-\eta_k)\mu_k + \eta_k\left\{\argmin_{u \in \mathcal{P}(\mathcal{X})} \left\{J(\mu_k) + J'_{\mu_k}(u)\right\}\right\}, \quad k \geq 0.\end{equation}
Towards further simplifying~\eqref{dFW0} toward a ``particle update,'' we observe through the following lemma that at any $\mu \in \mathcal{P}(\mathcal{X})$, the ``direction'' $u$ that minimizes the von Mises derivative $J'_{\mu}(u)$ is simply the Dirac measure concentrated at a point $x^*(\mu)$ that minimizes the influence function $h_{\mu}(\cdot)$ at $\mu$.

\begin{lemma} [Solution to FW Subproblem]\label{lem:dFW} Let $\mu \in \mathcal{P}(\mathcal{X})$ be such that $h_{\mu}(\cdot)$ attains its minimum on $\mathcal{X}$. Then, for fixed $\mu$,
\begin{equation}
\argmin_{u \in \mathcal{P}(\mathcal{X})} J'_{\mu}(u) = \delta_{x^*(\mu)}, \mbox{ where } x^*(\mu) \in \argmin_{x \in \mathcal{X}} \,\, h_{\mu}(x).
\end{equation}
\end{lemma}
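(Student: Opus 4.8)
The plan is to reduce the infinite-dimensional subproblem to the elementary fact that a linear functional over the probability simplex is minimized at an extreme point, namely a Dirac measure. First I would invoke the identity established in~\eqref{central}, which expresses the von Mises derivative purely through the influence function: for any $u \in \mathcal{P}(\mathcal{X})$,
$$J'_{\mu}(u-\mu) = \int_{\mathcal{X}} h_{\mu}(x)\, u(\mathrm{d}x) = \mathbb{E}_{X\sim u}[h_{\mu}(X)].$$
The crucial observation is that, with $\mu$ fixed, the map $u \mapsto J'_{\mu}(u-\mu)$ is \emph{linear} in the decision variable $u$, so minimizing it over $\mathcal{P}(\mathcal{X})$ amounts to minimizing the expectation of the fixed function $h_{\mu}$ with respect to the choice of distribution $u$.

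Second, I would produce a uniform lower bound. By hypothesis $h_{\mu}$ attains its minimum on $\mathcal{X}$, so $x^*(\mu) \in \argmin_{x\in\mathcal{X}} h_{\mu}(x)$ exists and $h_{\mu}(x) \geq h_{\mu}(x^*(\mu))$ for all $x \in \mathcal{X}$. Integrating against an arbitrary $u \in \mathcal{P}(\mathcal{X})$ and using $u(\mathcal{X})=1$ gives
$$J'_{\mu}(u-\mu) = \int_{\mathcal{X}} h_{\mu}(x)\, u(\mathrm{d}x) \geq h_{\mu}(x^*(\mu)).$$
Third, I would exhibit a measure attaining this bound: evaluating the functional at $u=\delta_{x^*(\mu)}$ yields $J'_{\mu}(\delta_{x^*(\mu)}-\mu) = \int h_{\mu}\, \mathrm{d}\delta_{x^*(\mu)} = h_{\mu}(x^*(\mu))$, which matches the lower bound exactly. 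Hence $\delta_{x^*(\mu)}$ is a minimizer, establishing the claim.

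The argument is short, and the only step requiring genuine care is the first: correctly transferring from the definitional form of the von Mises derivative (written via $\phi_{\mu}$) to the influence-function form $\int h_{\mu}\, \mathrm{d}u$. This rests on the fact that $\phi_{\mu}$ and $h_{\mu}$ differ by an additive constant and that $(u-\mu)$ has total mass zero, so the constant cancels --- exactly the content of~\eqref{central}; I would make sure the von Mises derivative is assumed to exist along every direction $u-\mu$ so that the identity holds for all $u \in \mathcal{P}(\mathcal{X})$, not merely along Dirac directions $\delta_x - \mu$. A secondary subtlety worth flagging is uniqueness: when $h_{\mu}$ attains its minimum on a set of more than one point, every measure supported on $\argmin_{x\in\mathcal{X}} h_{\mu}(x)$ is also a minimizer, so the stated $\delta_{x^*(\mu)}$ should be read as one representative solution rather than the unique one.
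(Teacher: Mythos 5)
Your proposal is correct and follows essentially the same route as the paper's own proof: represent $J'_{\mu}(u-\mu)$ as $\int_{\mathcal{X}} h_{\mu}\,\mathrm{d}u$ via~\eqref{central}, bound it below by $h_{\mu}(x^*(\mu))$ using $u(\mathcal{X})=1$, and observe that $\delta_{x^*(\mu)}$ attains the bound. Your added remarks on the existence of the von Mises derivative along all directions and on non-uniqueness of the minimizer are sensible clarifications but do not change the argument.
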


\begin{proof}{Proof.}
For each $u \in \mathcal{P}(\mathcal{X})$,
\begin{align*} J'_{\mu}(u) & = \int_{\mathcal{X}} h_{\mu}(x) \, u(dx)  \nonumber \\
& \geq \int_{\mathcal{X}} h_{\mu}(x^*(\mu)) \, u(dx)   \nonumber \\
& = h_{\mu}(x^*(\mu)). \qed
\end{align*}    
\end{proof}

From~\eqref{dFW0} and Lemma~\ref{lem:dFW}, we get the deterministic Frank-Wolfe ``particle update'' recursion on probability spaces:
\begin{equation}\label{dFW} \mu_{k+1} = (1-\eta_k)\mu_k + \eta_k\delta_{x^*(\mu_k)}; \quad x^*(\mu_k) \in \argmin_{x \in \mathcal{X}} \,\, h_{\mu_k}(x). \tag{dFW}
\end{equation} Implicit in the recursion~\eqref{dFW} is that the function $h_{\mu}$ attains its minimum on $\mathcal{X}$. Since $\mathcal{X}$ is compact, this is true if, e.g., $h_{\mu}$ is continuous on $\mathcal{X}$. 

As in optimization over $\mathbb{R}^d$, the smoothness of the objective function $J$ plays a pivotal role in analyzing the convergence rate through a ``smooth function inequality'' for probability spaces. In obtaining such an inequality, we need a notion of smoothness of $J$ through an appropriate metric on $\mathcal{P}(\mathcal{X})$ such as the \emph{total variation} distance, as defined through~\eqref{Lsmooth}.

\begin{lemma}[Smooth Functional Inequality] Suppose $J$ is convex and $L$-smooth. Then, for any $\mu,\nu \in \mathcal{P}(\mathcal{X})$, $J$ satisfies \begin{equation}\label{descentlemmagen} 0 \leq J(\nu) - \left( J(\mu) + J'_{\mu}(\nu) \right)  \leq  \frac{L}{2}\|\nu-\mu\|^2. \end{equation}
\end{lemma}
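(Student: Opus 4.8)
The plan is to establish the two inequalities separately, mirroring the classical Euclidean descent lemma. The lower bound $0 \le J(\nu) - (J(\mu) + J'_{\mu}(\nu-\mu))$ is simply the first-order characterization of convexity: for $t \in (0,1)$, convexity of $J$ gives $J(\mu + t(\nu-\mu)) \le (1-t)J(\mu) + t J(\nu)$, so that $\tfrac{1}{t}\{J(\mu+t(\nu-\mu)) - J(\mu)\} \le J(\nu) - J(\mu)$; letting $t \to 0^+$ and invoking the definition of the von Mises derivative yields $J'_{\mu}(\nu-\mu) \le J(\nu) - J(\mu)$.

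For the upper bound I would introduce the scalar function $g(t) := J(\mu_t)$ along the segment $\mu_t := (1-t)\mu + t\nu$, $t \in [0,1]$, which is convex in $t$ because $J$ is convex and $t \mapsto \mu_t$ is affine. The crux is to identify the one-sided derivative of $g$. Using $\nu - \mu_t = (1-t)(\nu-\mu)$, one rewrites, for small $s>0$,
\[
\mu_t + s(\nu-\mu) = \Bigl(1 - \tfrac{s}{1-t}\Bigr)\mu_t + \tfrac{s}{1-t}\,\nu,
\]
which exhibits $\mu_t + s(\nu-\mu)$ as a genuine perturbation of $\mu_t$ towards the probability measure $\nu$. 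Dividing by $s$, passing to the limit, and appealing to the linearity in~\eqref{vonlinear} together with the influence-function representation~\eqref{central}, I obtain $g'_+(t) = \tfrac{1}{1-t}J'_{\mu_t}(\nu-\mu_t) = \int_{\mathcal{X}} h_{\mu_t}\,\mathrm{d}(\nu-\mu)$. Since $g$ is convex on $[0,1]$, its right derivative exists everywhere, is monotone and integrable, and the fundamental theorem of calculus gives $J(\nu) - J(\mu) = g(1) - g(0) = \int_0^1 g'_+(t)\,\mathrm{d}t$.

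Subtracting $J'_{\mu}(\nu-\mu) = \int_{\mathcal{X}} h_\mu\,\mathrm{d}(\nu-\mu)$ then leaves
\[
J(\nu) - J(\mu) - J'_{\mu}(\nu-\mu) = \int_0^1 \int_{\mathcal{X}} \bigl(h_{\mu_t} - h_\mu\bigr)\,\mathrm{d}(\nu-\mu)\,\mathrm{d}t,
\]
which I would bound using $L$-smoothness. Since $\|\mu_t - \mu\| = t\|\nu-\mu\|$ in the total variation metric~\eqref{totvar}, the smoothness condition~\eqref{Lsmooth} yields $\sup_{x}|h_{\mu_t}(x) - h_\mu(x)| \le L t \|\nu-\mu\|$; combining this sup-norm control with the fact that integration of a bounded function against the mean-zero signed measure $\nu-\mu$ is dominated by its total variation, and then integrating $t$ over $[0,1]$, produces the quadratic bound $\tfrac{L}{2}\|\nu-\mu\|^2$.

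I expect two points to demand the most care. The first, and principal, obstacle is the rigorous justification that $g'_+(t) = \int_{\mathcal{X}} h_{\mu_t}\,\mathrm{d}(\nu-\mu)$ and that the fundamental theorem of calculus applies: the influence function is only a one-sided weak derivative, so rather than assuming a two-sided $C^1$ derivative I must lean on the convexity of $g$ to guarantee that its right derivative exists throughout $[0,1)$, is monotone, and reconstructs $g$ by integration. The second is bookkeeping of the constant: because $\int f\,\mathrm{d}(\nu-\mu)$ is controlled by $\|f\|_\infty$ through the \emph{total variation} of $\nu-\mu$, one must track carefully how the defined distance~\eqref{totvar} enters in order to land precisely on the stated factor $L/2$.
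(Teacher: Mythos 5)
Your proof is correct and takes essentially the same route as the paper's: both arguments integrate the derivative $t \mapsto \int_{\mathcal{X}} h_{\mu_t}\,\mathrm{d}(\nu-\mu)$ along the segment $\mu_t=(1-t)\mu+t\nu$, split off the linear term $J'_{\mu}(\nu-\mu)$, bound the remainder $\int_0^1\int_{\mathcal{X}}\bigl(h_{\mu_t}-h_{\mu}\bigr)\,\mathrm{d}(\nu-\mu)\,\mathrm{d}t$ by H\"older's inequality together with $L$-smoothness (giving the factor $\int_0^1 Lt\,\mathrm{d}t = L/2$), and obtain the lower bound from the first-order convexity inequality. The only differences are ones of rigor, in your favor: the paper simply asserts differentiability of $t\mapsto J(\mu_t)$ and invokes the fundamental theorem of calculus, whereas you justify that step via convexity of the scalar function $g(t)=J(\mu_t)$ (existence, monotonicity, and integrability of its right derivative), and you explicitly flag the constant bookkeeping in the H\"older step relative to the definition~\eqref{totvar}, a subtlety the paper's proof passes over silently.
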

\begin{proof}{Proof.}
    Let $\nu_t = \mu + t(\nu-\mu)$ and {$F(t) = J(\nu_t)$,} where $0\leq t \leq 1$. {Notice that $F(1) = J(\nu)$ and $F(0) = J(\mu)$.} By the fundamental theorem of calculus, we express {\[
J(\nu) - J(\mu) = F(1) - F(0) = \int_0^1 F'(t) \, \mathrm{d}t.
\]  Now, observe that  
\[
F'(t) = \lim_{h \to 0} \frac{F(t+h) - F(t)}{h} = \lim_{h \to 0} \frac{J(\nu_{t} + h(\nu - \mu)) - J(\nu_t)}{h} = J'_{\nu_t}(\nu - \mu),
\]  
where the second equality follows from $\nu_{t+h} = \nu_t + h(\nu - \mu)$, and the last equality is due to the definition of the von Mises derivative. Thus, we can write \[
J(\nu) = J(\mu) + \int_0^1 J'_{\nu_t}(\nu - \mu) \, \mathrm{d}t = J(\mu) + J'_{\mu}(\nu) + \int_0^1 \left(J'_{\nu_t}(\nu - \mu) - J'_{\mu}(\nu)\right) \, \mathrm{d}t.
\] By definition the difference term can be expressed as  
\begin{align}\label{eq:diff}
J'_{\nu_t}(\nu - \mu) - J'_{\mu}(\nu ) &= \int h_{\nu_t}(x) \, \mathrm{d}(\nu - \mu)(x) - \int h_{\mu}(x) \, \mathrm{d}(\nu - \mu)(x) \nonumber \\ &= \int \left(h_{\nu_t}(x) - h_{\mu}(x)\right) \, \mathrm{d}(\nu - \mu)(x).
\end{align}}
% \begin{align} J(\nu) &= J(\mu) + \int_0^1 \frac{\partial J(\nu_t)}{\partial t} \, \mbox{d}t \nonumber \\ &= J(\mu) + \int_0^1 J'_{\nu_t}(\nu-\nu_t) - J'_{\nu_t}(\mu-\nu_t) \, \mbox{d}t \nonumber \\
% &= J(\mu) + J'_{\mu}(\nu-\mu) + \int_0^1 \left( J'_{\nu_t}(\nu-\nu_t) - J'_{\nu_t}(\mu-\nu_t) - J'_{\mu}(\nu-\mu) \right) \, \mbox{d}t.\end{align} 
Considering the convexity of $J$, we have \begin{equation} \label{convJ} J(\nu) \geq J(\mu) + J'_{\mu}(\nu).\end{equation} {According to~\eqref{eq:diff}}, \begin{align} \label{intineq}\left | \int_0^1 \left(  {J'_{\nu_t}(\nu - \mu)}- J'_{\mu}(\nu) \right) \, \mbox{d}t \right | & \leq \int_0^1 \left |  \int_{\mathcal{X}} (h_{\nu_t}(x)-h_{\mu}(x)) \, (\nu-\mu)( \mathrm{d}x) \right| \, \mbox{d}t \nonumber \\ & \leq \int_0^1 \sup_{x \in \mathcal{X}} |h_{\nu_t}(x)-h_{\mu}(x) | \, \left\|\nu - \mu \right\| \, \mbox{d}t \nonumber \\ & \leq \frac{L}{2} \, \left\|\nu - \mu \right\|^2,
\end{align} where the second inequality follows from Hölder's inequality, and the third inequality results from the L-smoothness. Use~\eqref{convJ} and~\eqref{intineq} to see that the assertion of the lemma holds. \hfill $\blacksquare$
\end{proof}

We next characterize the complexity (in objective function value) of the iterates $(\mu_k, k \geq 1)$ generated by \eqref{dFW}. 

\begin{theorem}[dFW Complexity]\label{thm:dFWcomp}
Suppose $J$ is convex and $L$-smooth, and the step-sizes $\{\eta_k, k \geq 0\}$ in~\eqref{dFW} are chosen as $\eta_k = \frac{2}{k+2}.$ Then, $$J(\mu_k) - J^* \leq \frac{2LR^2}{k+2}, \quad k \geq 1$$ where $J^* := \inf\left\{J(\mu): \mu \in \mathcal{P}(\mathcal{X})\right\}$ and $R:= \sup\left\{\|\mu-\nu\|: \mu,\nu \in \mathcal{P}(\mathcal{X})\right\} \leq 2.$
\end{theorem}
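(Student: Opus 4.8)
The plan is to mirror the classical Euclidean Frank-Wolfe analysis, transporting each step to the probability-space setting via the tools already established. The central object is the optimality gap $a_k := J(\mu_k) - J^*$, for which I would derive a one-step contraction-type recursion of the form $a_{k+1} \le (1-\eta_k)a_k + \tfrac{L}{2}\eta_k^2 R^2$ and then close the argument by induction on $k$.

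To derive this recursion, first apply the Smooth Functional Inequality~\eqref{descentlemmagen} with $\mu = \mu_k$ and $\nu = \mu_{k+1}$, giving
\begin{equation*} J(\mu_{k+1}) \le J(\mu_k) + J'_{\mu_k}(\mu_{k+1}-\mu_k) + \frac{L}{2}\|\mu_{k+1}-\mu_k\|^2. \end{equation*}
Since~\eqref{dFW} gives $\mu_{k+1}-\mu_k = \eta_k(\delta_{x^*(\mu_k)}-\mu_k)$ and the von Mises derivative is linear in its argument (see~\eqref{vonlinear}), the middle term equals $\eta_k J'_{\mu_k}(\delta_{x^*(\mu_k)}-\mu_k)$, while the last term is $\tfrac{L}{2}\eta_k^2\|\delta_{x^*(\mu_k)}-\mu_k\|^2 \le \tfrac{L}{2}\eta_k^2 R^2$ by the definition of $R$. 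The crux is Lemma~\ref{lem:dFW}: because $\delta_{x^*(\mu_k)}$ minimizes $J'_{\mu_k}(u-\mu_k)$ over all $u \in \mathcal{P}(\mathcal{X})$, for any competitor $\nu$ we have $J'_{\mu_k}(\delta_{x^*(\mu_k)}-\mu_k) \le J'_{\mu_k}(\nu-\mu_k)$, and convexity of $J$ then yields $J'_{\mu_k}(\nu-\mu_k) \le J(\nu) - J(\mu_k)$. Choosing $\nu$ to be a measure with $J(\nu)$ within $\varepsilon$ of $J^*$ and letting $\varepsilon \downarrow 0$ gives $J'_{\mu_k}(\delta_{x^*(\mu_k)}-\mu_k) \le J^* - J(\mu_k) = -a_k$. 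Assembling these pieces produces the claimed recursion $a_{k+1} \le (1-\eta_k)a_k + \tfrac{L}{2}\eta_k^2 R^2$.

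The final step is an induction with $\eta_k = \tfrac{2}{k+2}$ to establish $a_k \le \tfrac{2LR^2}{k+2}$. For the base case $k=1$, note that $\eta_0 = 1$ annihilates the $a_0$ term, leaving $a_1 \le \tfrac{L}{2}R^2 \le \tfrac{2LR^2}{3}$. For the inductive step, substituting the hypothesis $a_k \le \tfrac{2LR^2}{k+2}$ and $\eta_k = \tfrac{2}{k+2}$ into the recursion gives $a_{k+1} \le \tfrac{2(k+1)LR^2}{(k+2)^2}$, and the desired bound $\tfrac{2LR^2}{k+3}$ then follows from the elementary inequality $(k+1)(k+3) \le (k+2)^2$.

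I expect the substantive content to lie entirely in assembling the one-step recursion rather than in the induction, which is routine once the recursion is in hand. The only genuine subtlety is that $J^*$ is defined as an infimum that need not be attained; I would handle this cleanly with the $\varepsilon$-approximation argument above rather than invoking the existence of an optimizer $\mu^*$. A secondary point worth verifying is that the linearity of $J'_{\mu_k}(\cdot)$ and the bound $\|\delta_{x^*(\mu_k)}-\mu_k\| \le R$ are both legitimate in the total-variation setting, but these follow directly from the definitions in Section~\ref{sec:preliminaries}.
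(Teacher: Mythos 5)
Your proof is correct and takes essentially the same route as the paper's: the smooth functional inequality, Lemma~\ref{lem:dFW}, and convexity are combined to obtain the one-step recursion $\Delta_{k+1} \le (1-\eta_k)\Delta_k + \tfrac{1}{2}L\eta_k^2R^2$, which is then closed by the same induction on $k$ (your base case and inductive step both check out). The one small difference is to your credit: the paper's proof compares against an optimizer $\mu^*$, implicitly assuming the infimum $J^*$ is attained, whereas your $\varepsilon$-approximation argument needs only the definition of $J^*$ as an infimum.
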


\begin{proof}{Proof.}
    We can write
    \begin{align}
        J(\mu_{k+1}) - J(\mu_k) &\leq J'_{\mu_k}(\mu_{k+1}) + \frac{1}{2}L\|\mu_{k+1}-\mu_k\|^2 \tag{from \eqref{descentlemmagen}} \\ & = \eta_k J'_{\mu_k}(\delta_{x^*(\mu_k)}) + \frac{1}{2}\eta_k^2L\|\delta_{x^*(\mu_k)} - \mu_k\|^2 \nonumber \\
        & \leq \eta_k J'_{\mu_k}(\mu^*) + \frac{1}{2}\eta_k^2L\|\delta_{x^*(\mu_k)} - \mu_k\|^2  \tag{from Lemma~\ref{lem:dFW}} \nonumber \\
        & \leq \eta_k J'_{\mu_k}(\mu^*) + \frac{1}{2}\eta_k^2LR^2 \nonumber \\
        & \leq \eta_k (J^* - J(\mu_k)) + \frac{1}{2}\eta_k^2LR^2. \tag{from convexity}
    \end{align}
Setting $\Delta_k := J(\mu_k) - J^*,$ the above implies that \begin{equation}
\Delta_{k+1} \leq (1-\eta_k)\Delta_k + \frac{1}{2}\eta_k^2LR^2, \quad k \geq 0.
\end{equation}
A simple induction using the fact that $\eta_k = 2/(k+2)$ finishes the proof. \hfill $\blacksquare$
\end{proof}

\begin{algorithm}
\caption{Fully-corrective Frank Wolfe on probability spaces}\label{alg:DFW}
\begin{tabular} {l}
  
  \textbf{Input:} Initial measure $\mu_0\in \mathcal{P}(\mathcal{X})$ \\ 
  \textbf{Output:} Iterates $\mu_1, \dots, \mu_{K}\in \mathcal{P}(\mathcal{X})$\\
  1 $S_0 \leftarrow \{\mu_0\}$\\
  2 \textbf{for} $k=1,2,\dots, K$ \textbf{do}\\
  3 ~~$x^*(\mu_k)\leftarrow \argmin_{x\in \mathcal{X}} h_{\mu_k}(x)$\\
  4 ~~$S_{k+1}\leftarrow S_k\cup\{\delta_{x^*(\mu_k)}\}$\\
  5 ~~$\mu_{k+1}\leftarrow \argmin_{\mu\in \mathrm{conv}(S_{k+1})} J(\mu)$\\
  6 \textbf{end for}\\
   
 \end{tabular}
\end{algorithm} Two further discussion points about~\eqref{dFW} and its properties are noteworthy.
\begin{enumerate}
\item[(a)] First, the~\eqref{dFW} recursion solves an infinite-dimensional optimization problem by accumulating point masses located strategically in $\mathbb{R}^d$. This is remarkable because an infinite-dimensional problem is being solved without explicit finite dimensionalization operations such as gridding.  Although, the computational price manifests in a different form, since constructing each iterate involves solving a \emph{global optimization} problem over the compact set $\mathcal{X} \subset \mathbb{R}^d$. This is a formidable task in principle, but as~\cite{2017boygeorec} notes, and as we have observed elsewhere when solving an emergency response problem \cite{berhenjagli21}, there is often a lot of structure in specific contexts that allows for solving the global optimization problems efficiently. Such structure can be combined with imprecise solving at each step, an idea we pursue in the next section.  \item[(b)] There is evidence~\cite{2013brepik,2017boygeorec} that during implementation, a more nuanced version of Frank-Wolfe, called the \emph{fully corrective} version, performs better. As seen in Algorithm~\ref{alg:DFW}, the simple modification in fully corrective Frank-Wolfe is easily internalized. Recall that when using regular Frank-Wolfe leading to~\eqref{dFW}, $\mu_{k+1}$ is obtained as a convex combination of the previous iterate $\mu_k$ and the minimizer $\delta_{x^*(\mu_k)}$ of $h_{\mu_k}$. In the fully corrective version, however, $\mu_{k+1}$ is obtained as the minimum of $J$ over the convex hull of $\delta_{x^*(\mu_k)}, j=1,2,\ldots,k.$ {Furthermore, this fully corrective step, as shown in Step 5, is equivalent to solving the following optimization problem:
\begin{equation}
\min_{p_1, \dots, p_k\in \mathbb{R}} J\left( \sum_{i=1}^{k} p_i \delta_{x^*(\mu_i)} \right) 
\quad \text{s.t.} \quad \sum_{i=1}^{k} p_i = 1, \quad p_i \geq 0.
\end{equation}
Since \( J \) is convex, this results in a finite dimensional convex optimization problem, which remains computationally feasible in practice.}
\end{enumerate}

\section{STOCHASTIC FW RECURSION.}\label{sec:sfw}

We now consider the often-encountered scenario where the influence function $h_{\mu}$ associated with the objective $J$ is not directly observable but we have access to unbiased Monte Carlo observations through a first-order oracle. Precisely, suppose that $Y$ is a random variable defined on { a probability space $(\mathcal{X},\mathcal{A},P)$}, and that $F(\cdot,Y): \mathcal{P}(\mathcal{X}) \to \mathbb{R}$, $H_{\mu}(\cdot,Y) : \mathcal{X} \to \mathbb{R}$ are random functions that form unbiased estimators of $J$ and $h_{\mu}$, respectively, that is, $\mathbb{E}[F(\cdot,Y)] = J(\cdot), \mathbb{E}[H_{\mu}(\cdot,Y)] = h_{\mu}(\cdot).$ Suppose that $F(\cdot,Y),H_{\mu}(\cdot,Y)$ are observable (only) using Monte Carlo so that we can define the sample-average estimators 
\begin{equation}\label{unbiased_inf}
J_{m}(\mu) := \frac{1}{m}\sum_{j=1}^m F(\mu,Y_j); \quad H_{\mu,m}(x) := \frac{1}{m}\sum_{j=1}^m H_{\mu}(x,Y_j), \quad \mu \in \mathcal{P}(\mathcal{X}), x \in \mathcal{X}.
\end{equation}
For further intuition on $J_m$ and $H_{\mu,m}$, consider the $P$-means example discussed in Section~\ref{sec:pmeans} where we saw that
\begin{align}\label{totcost_pmeansagain} J(\mu) &= \sum_{i=1}^{n_0} \, \int_0^{u}  \exp\left\{-\mu(B(\ell_i,t))\right\} \, \mathrm{d}t, \quad \mu \in \mathcal{P}(\mathcal{X}).
\end{align}
and that \begin{equation}\label{inf_pmeansagain} h_{\mu}(x) = -\sum_{i=1}^{n_0} \, \int_0^{u} \mathbb{I}\left( \left\|\ell_i - x \right\| \leq t \right) \, \exp\left\{-\mu(B(\ell_i,t))\right\} \, \mathrm{d}t + \sum_{i=1}^{n_0} \, \int_0^{u} \mu(B(\ell_i,t)) \, \exp\left\{-\mu(B(\ell_i,t))\right\} \, \mathrm{d}t. \end{equation} Unbiased estimators $J_m, H_{\mu,m}$ in~\eqref{unbiased_inf} for $J$ and $h_{\mu}$, respectively, can then be constructed using 
\begin{align} F(\mu,Y_j) & = \sum_{i=1}^{n_0} u \, \exp\{-\mu(B(\ell_i,Y_j))\}; \mbox{ and } \notag\\
H_{\mu}(x,Y_j) & = \sum_{i=1}^{n_0} u\,\left[-\mathbb{I}\left( \left\|\ell_i - x \right\| \leq Y_j \right)+ \mu(B(\ell_i,Y_j)) \right] \, \exp\left\{-\mu(B(\ell_i,Y_i))\right\}, \label{inf_pmeans_est}  
\end{align}
where $Y_j, j = 1,2,\ldots,n$ are iid copies of $Y \sim \mbox{Uniform}(0,u)$. 

%we see that $Z \equiv (Y,T)$ where $Y \perp T$, $Y \sim \eta, T \sim \beta_0$ and $H_{\mu}(x)= -\delta_x(\bar{B}(Y,T)) \, \exp\{-\mu(\bar{B}(Y,T))\}.$)  

The existence of an unbiased Monte Carlo estimator for $h_{\mu}(\cdot)$ motivates the following stochastic Frank-Wolfe (sFW) recursion. (A fully corrective version appears as Algorithm~\ref{alg:fullycorrectivestoch}):
\begin{align}\label{sfw} \mu_{k+1} &= (1-\eta_k)\mu_k + \eta_k\delta_{\hat{x}_{k+1}(m_{k+1})} \tag{sFW} \\
\hat{x}_{k+1}(m_{k+1}) & \in \argmin_{x \in \mathcal{X}} \,\, \left\{H_{\mu_k,m_{k+1}}(x)\right\}. \nonumber
\end{align} { Here, \( m_k \) represents the number of samples at the \( k \)th iteration.}

\begin{algorithm}
\caption{Fully-corrective stochastic Frank Wolfe on probability spaces}\label{alg:fullycorrectivestoch}
\begin{tabular} {l}
  
  \textbf{Input:} Initial measure $\mu_0\in \mathcal{P}(\mathcal{X})$, parameter $c$ \\ 
  \textbf{Output:} Iterates $\mu_1, \dots, \mu_{K}\in \mathcal{P}(\mathcal{X})$\\
  1 $S_0 \leftarrow \{\mu_0\}$\\
  2 \textbf{for} $k=1,2,\dots, K$ \textbf{do}\\
  3 ~~$m_{k+1}\leftarrow c\left(k+2\right)^2$ \\
  4 ~~$\hat{x}_{k+1}(m_{k+1})\leftarrow \argmin_{x\in \mathcal{X}} H_{\mu_k,m_{k+1}}(x)$\\
  5 ~~$S_{k+1}\leftarrow S_k\cup\{\delta_{\hat{x}_{k+1}(m_{k+1})}\}$\\
  6 ~~$\mu_{k+1}\leftarrow \argmin_{\mu\in \mathrm{conv}(S_{k+1})} J(\mu)$\\
  7 \textbf{end for}\\
   
 \end{tabular}
\end{algorithm}

In writing~\eqref{sfw}, we are implicitly assuming that $H_{\mu_k,m_{k+1}}$ attains its minimum on $\mathcal{X}$. (We can suitably modify Lemma~\ref{lem:infcont} to obtain sufficient conditions for the continuity of $H_{\mu_k,m_{k+1}}$ on $\mathcal{X}$.) Also, it is important that even though $H_{\mu_k,m_{k+1}}$ is an unbiased estimator of $h_{\mu_k}$, $\hat{x}_{k+1}(m_{k+1})$ is not, in general, an unbiased estimator of $\arg\inf_{x \in \mathcal{X}} h_{\mu_k}(x)$. However, $\hat{x}_{\mu,m}$ is a consistent estimator of $\arg\inf_{x \in \mathcal{X}} h_{\mu}(x)$ under certain regularity conditions (see for instance~\cite{2009shadenrus}) suggesting that increasing $m_k \to \infty$ as $k \to \infty$ will result in some form of consistency. 
In the following theorem, convergence (in function value) along with a complexity bound on the sequence $\{J(\mu_k), k \geq 1\}$ is attained by ``killing'' the bias due to $H_{\mu_k,m_k}$ through a sample size increase. The proof is not novel, and follows along lines similar to what is available in the Euclidean context~\cite{2018botcurnoc}.

\begin{theorem}[Complexity]\label{thm:sfwcomplexity}
Suppose that $J$ is convex and $L$-smooth, that
\begin{equation}\label{comp_thm_step} \eta_k = \frac{2}{k+2}; \quad m_k \ge \left(\frac{c_0(k+2)}{LR}\right)^2
\end{equation}
and the CLT-scaling assumption holds, that is, there exists $c_0 < \infty$ such that for all $\mu \in \mathcal{P}(\mathcal{X})$,
\begin{equation}\label{clt-sc} \mathbb{E}\left[ \sqrt{m}\left \| H_{\mu,m} - h_{\mu} \right\|_{\infty} \right] \leq c_0. \tag{CLT-sc}
\end{equation}
Then, the iterates $\mu_k, k \geq 1$ generated by the~\eqref{sfw} recursion satisfy $$\mathbb{E}\left[ J(\mu_k) - J(\mu^*)\right] \leq \frac{4LR^2}{k+2}, \quad k \geq 1$$ where $R = \sup\{\|\mu_1-\mu_2\|, \mu_1,\mu_2 \in \mathcal{P}(\mathcal{X})\}.$
\end{theorem}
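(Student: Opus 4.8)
The plan is to reproduce the induction behind Theorem~\ref{thm:dFWcomp} while carrying along an extra error term that measures how far the sampled minimizer $\hat{x}_{k+1}$ sits from a true minimizer of $h_{\mu_k}$. The starting point is the smooth functional inequality~\eqref{descentlemmagen} applied at $\mu=\mu_k$, $\nu=\mu_{k+1}$. Since $\mu_{k+1}-\mu_k=\eta_k(\delta_{\hat{x}_{k+1}}-\mu_k)$, linearity of the von Mises derivative (so that $J'_{\mu_k}(\mu_{k+1}-\mu_k)=\eta_k\,h_{\mu_k}(\hat{x}_{k+1})=\eta_k J'_{\mu_k}(\delta_{\hat{x}_{k+1}}-\mu_k)$, using $\int h_{\mu_k}\,\mathrm{d}\mu_k=0$) together with $\|\delta_{\hat{x}_{k+1}}-\mu_k\|\le R$ gives
\begin{equation*}
J(\mu_{k+1}) \le J(\mu_k) + \eta_k\, J'_{\mu_k}(\delta_{\hat{x}_{k+1}}-\mu_k) + \tfrac{1}{2}\eta_k^2 L R^2 .
\end{equation*}

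The key step is the treatment of the first-order term, where the deterministic argument breaks down: $\hat{x}_{k+1}$ minimizes the \emph{sampled} influence function $H_{\mu_k,m_{k+1}}$, not $h_{\mu_k}$. Rather than controlling $\hat{x}_{k+1}$ directly (it is a biased estimator of $\argmin_x h_{\mu_k}(x)$), I bound its true sub-optimality by a uniform sup-norm deviation. Writing $e_{k+1} := \|H_{\mu_k,m_{k+1}} - h_{\mu_k}\|_\infty$, a two-sided sandwich — add and subtract $H_{\mu_k,m_{k+1}}$ at $\hat{x}_{k+1}$ and at any competing point, then invoke optimality of $\hat{x}_{k+1}$ for $H_{\mu_k,m_{k+1}}$ — yields
\begin{equation*}
h_{\mu_k}(\hat{x}_{k+1}) \le \inf_{x\in\mathcal{X}} h_{\mu_k}(x) + 2 e_{k+1} \le J'_{\mu_k}(\mu^*-\mu_k) + 2 e_{k+1},
\end{equation*}
where the last inequality uses $\inf_x h_{\mu_k}(x)\le \int h_{\mu_k}\,\mathrm{d}\mu^* = J'_{\mu_k}(\mu^*-\mu_k)$ exactly as in the proof of Lemma~\ref{lem:dFW}. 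Convexity then converts this into a gap, $J'_{\mu_k}(\mu^*-\mu_k)\le J(\mu^*)-J(\mu_k)$, so that with $\Delta_k := J(\mu_k)-J(\mu^*)$ we obtain the stochastic one-step recursion
\begin{equation*}
\Delta_{k+1} \le (1-\eta_k)\Delta_k + 2\eta_k e_{k+1} + \tfrac{1}{2}\eta_k^2 L R^2 .
\end{equation*}

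The final step handles randomness and closes the induction. Because $\mu_k$ is $\mathcal{F}_k$-measurable while $H_{\mu_k,m_{k+1}}$ is built from fresh samples, I apply the CLT-scaling hypothesis~\eqref{clt-sc} conditionally on $\mathcal{F}_k$ to get $\mathbb{E}[e_{k+1}\mid\mathcal{F}_k]\le c_0/\sqrt{m_{k+1}}$, and then pass to total expectations by the tower property. The schedule $m_{k+1}\ge(c_0(k+2)/(LR))^2$ forces $\mathbb{E}[e_{k+1}]\le LR/(k+2)$, which is tuned precisely so the noise term $2\eta_k\mathbb{E}[e_{k+1}]$ is of the same $O((k+2)^{-2})$ order as the curvature term $\tfrac{1}{2}\eta_k^2 LR^2$. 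Absorbing both into a single $O((k+2)^{-2})$ term gives
\begin{equation*}
\mathbb{E}[\Delta_{k+1}] \le (1-\eta_k)\,\mathbb{E}[\Delta_k] + \frac{C\,LR^2}{(k+2)^2},
\end{equation*}
and the telescoping induction of Theorem~\ref{thm:dFWcomp} (with $\eta_k=2/(k+2)$ and the elementary inequality $(k+1)(k+3)\le(k+2)^2$) delivers the stated bound $\mathbb{E}[\Delta_k]\le 4LR^2/(k+2)$.

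I expect the main obstacle to be the second step. Since $\hat{x}_{k+1}$ minimizes a \emph{noisy} criterion, it is not an unbiased estimator of $\argmin_x h_{\mu_k}(x)$, so Lemma~\ref{lem:dFW} cannot be invoked verbatim; the fix is to replace exact optimality by an approximate-optimality bound measured in sup-norm, and then to lean on the CLT-scaling assumption together with the growing sample size $m_k=\Theta(k^2)$ to keep the accumulated per-iteration bias summable against the $\eta_k$ weights. A secondary technical point is the careful conditioning on the filtration $(\mathcal{F}_k)_{k\ge1}$, which legitimizes applying~\eqref{clt-sc} — a statement for each \emph{fixed} $\mu$ — at the random measure $\mu_k$. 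Finally, the fully corrective variant (Algorithm~\ref{alg:fullycorrectivestoch}) only decreases $J(\mu_{k+1})$ relative to the plain convex combination, so the same bound transfers immediately.
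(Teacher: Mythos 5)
Your proof is correct and takes essentially the same route as the paper's: the smooth functional inequality, control of the sampled-minimizer error through the sup-norm deviation $e_{k+1}:=\|H_{\mu_k,m_{k+1}}-h_{\mu_k}\|_\infty$, a conditional application of \eqref{clt-sc} at the $\mathcal{F}_k$-measurable iterate $\mu_k$ combined with the $m_k=\Theta(k^2)$ schedule so that the bias term is $O(\eta_k^2)$, and then the standard Frank--Wolfe induction.

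The one difference worth flagging is the constant in the error term. Your pointwise sandwich yields the coefficient $2$ (i.e., the term $2\eta_k e_{k+1}$), whereas the paper collects the error into the single integral $\int_{\mathcal{X}}(h_{\mu_k}-H_{\mu_k,m_{k+1}})\,\mathrm{d}(\delta_{\hat{x}_{k+1}(m_{k+1})}-\mu^*)$ and bounds it by $R\,\eta_k e_{k+1}$ via H\"older; so your one-step recursion carries the constant $L(R+R^2/2)$ where the paper's carries $LR^2$. These agree exactly when $R=2$, which is indeed the total-variation diameter of $\mathcal{P}(\mathcal{X})$ under the convention in which the paper's H\"older step and its claim $R\le 2$ are consistent (the diameter being attained by pairs of distinct Dirac measures). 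But your final ``absorb both terms'' step silently assumes $2\le R$: if $R<2$, the induction applied to your recursion returns $4L(R+R^2/2)/(k+2)$, which is weaker than the stated $4LR^2/(k+2)$. Either stating $R=2$ explicitly, or replacing your factor $2$ by the bound $\|\delta_{\hat{x}_{k+1}(m_{k+1})}-\mu^*\|\le R$ as the paper does, closes this small bookkeeping gap; everything else---the sandwich via optimality of $\hat{x}_{k+1}(m_{k+1})$ for the sampled criterion, the conditioning that legitimizes applying \eqref{clt-sc} at a random measure, and the induction with $(k+1)(k+3)\le(k+2)^2$---is sound and mirrors the paper's argument.
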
  
\begin{proof}{Proof.}
Write
\begin{align*}
J(\mu_{k+1}) & \leq J(\mu_k) + J'_{\mu_k}(\mu_{k+1}) + \frac{L}{2}\|\mu_{k+1}-\mu_k\|^2  \tag{smooth} \\
&=  J(\mu_k) + \eta_k J'_{\mu_k}(\delta_{\hat{x}_{k+1}(m_{k+1})} ) + \frac{L}{2}\eta_k^2\|\delta_{\hat{x}_{k+1}(m_{k+1})}-\mu_k\|^2 \\
&\leq J(\mu_k) + \eta_k \int_{\mathcal{X}} h_{\mu_k} \,  \mbox{d}(\delta_{\hat{x}_{k+1}(m_{k+1})}-\mu_k) + \frac{L}{2}\eta_k^2 R^2  \tag{$\|\delta_{\hat{x}_{k+1}(m_{k+1})}-\mu_k\| \leq R$} \\
&\leq J(\mu_k) + \eta_k \int_{\mathcal{X}} H_{\mu_k,m_{k+1}} \,  \mbox{d}(\mu^*-\mu_k) \tag{by optimality of $\hat{x}_{k+1}(m_{k+1})$} \\
& \qquad + \eta_k \int_{\mathcal{X}} (h_{\mu_k}-H_{\mu_k,m_{k+1}}) \,  \mbox{d}(\delta_{\hat{x}_{k+1}(m_{k+1})}-\mu_k) + \frac{L}{2}\eta_k^2 R^2 \\
& = J(\mu_k) + \eta_k \int_{\mathcal{X}} h_{\mu_k} \,  \mbox{d}(\mu^*-\mu_k) + \eta_k \int_{\mathcal{X}} (h_{\mu_k}-H_{\mu_k,m_{k+1}}) \,  \mbox{d}(\delta_{\hat{x}_{k+1}(m_{k+1})}-\mu^*) + \frac{L}{2}\eta_k^2 R^2 \\
& \leq J(\mu_k) + \eta_k (J^*-J(\mu_k)) + \eta_k \int_{\mathcal{X}} (h_{\mu_k}-H_{\mu_k,m_{k+1}}) \,  \mbox{d}(\delta_{\hat{x}_{k+1}(m_{k+1})}-\mu^*) + \frac{L}{2}\eta_k^2 R^2. \tag{convexity}
\end{align*}
Conditioning both sides on $\mathcal{F}_k$, taking expectation, and denoting $\Delta_k := J(\mu_k) - J^*$, we get \begin{align}\label{condexpformart}
\mathbb{E}\left[\Delta_{k+1} \, \vert \, \mathcal{F}_k \right] & \leq (1-\eta_k) \Delta_k + \eta_k \mathbb{E}\left[\left.\int_{\mathcal{X}} (h_{\mu_k}-H_{\mu_k,m_{k+1}}) \,  \mbox{d}(\delta_{\hat{x}_{k+1}(m_{k+1})}-\mu^*) \, \right\vert \, \mathcal{F}_k \right] + \frac{L}{2}\eta_k^2 R^2 \nonumber \\ & \leq (1-\eta_k)\Delta_k + R\eta_k \mathbb{E}\left[\| h_{\mu_k} - H_{\mu_k,m_{k+1}} \|_{\infty} \, \vert \, \mathcal{F}_k \right] + \frac{L}{2}\eta_k^2R^2 \nonumber \\ & \leq (1-\eta_k)\Delta_k + R\eta_k \frac{c_0}{\sqrt{m_{k+1}}} + \frac{L}{2}\eta_k^2R^2 \nonumber \\
& \le (1-\eta_k)\Delta_k + L\eta_k^2R^2. \end{align}
Taking expectations again, we get
\begin{align} \mathbb{E}\left[\Delta_{k+1}\right] \leq  (1-\eta_k)\mathbb{E}\left[\Delta_k\right] + L\eta_k^2R^2, \quad k \geq 0.\end{align}
Now use induction to conclude that the assertion holds. \hfill $\blacksquare$\end{proof} 

Apart from the stipulations on the step size and sample size appearing in~\eqref{comp_thm_step}, Theorem~\ref{thm:sfwcomplexity} requires that the CLT-scaling assumption in~\eqref{clt-sc} is satisfied. The CLT-scaling assumption in~\eqref{clt-sc} is essentially a stipulation that the sample-paths $H_{\mu,m}(\cdot) - h_{\mu}(\cdot)$ do not exhibit excessive fluctuations, as is sometimes codified through requirements on the modulus of continuity~\cite[p. 80]{1999bil}. CLT-scaling appears to hold in many settings. For instance, consider again the $P$-means example discussed in Section~\ref{sec:pmeans}. Applying Theorem 6.1 in~\cite{geer2000empirical}, we can show that the empirical process $\{\sqrt{m}(H_{\mu,m}(x) - h_{\mu}(x)), x \in \mathcal{X}\}$ is a P-Donsker class~\cite[p. 88]{geer2000empirical}, implying (using the continuous mapping theorem) that $\|\sqrt{m}(H_{\mu,m} - h_{\mu})\|_{\infty} \inD \|Z\|_{\infty}$, where $Z = \{Z(x),x\in \mathcal{X}\}$ is a zero-mean Gaussian process indexed by $x$. Furthermore, since it can also be shown that $\|\sqrt{m}(H_{\mu,m} - h_{\mu})\|_{\infty}, m \geq 1$ is uniformly integrable, we see that $\mathbb{E}\left[ \| \sqrt{m}(H_{\mu,m} - h_{\mu}) \|_{\infty}\right]  \to \mathbb{E}\left[\|Z\|_{\infty}\right] < \infty$, implying that the CLT-scaling assumption holds for the $P$-means example.

It turns out that the same postulates as Theorem~\ref{thm:sfwcomplexity} also guarantee almost sure consistency on the optimality gap sequence $\{J(\mu_k)-J^*, k \geq 1\}$, and on the sequence of measures $\{\mu_k, k \geq 1\}$ under the weak topology.

\begin{theorem}[Almost Sure Convergence Rate]\label{thm:sfwasconsistency}
Suppose the postulates of Theorem~\ref{thm:sfwcomplexity} hold. Then, the iterates $\mu_k, k \geq 1$ generated by the~\eqref{sfw} recursion satisfy $$k^{1-\delta}(J(\mu_k)-J^*) \,\, \as \,\, 0, \quad \forall \, 0<\delta<1.$$  Moreover, { if the minimizer \(\mu^* := \arg\inf_{\mu \in \mathcal{P}(\mathcal{X})} J(\mu)\) is unique in the weak topology, then the sequence \((\mu_k)_{k \geq 1}\) converges to \(\mu^*\) almost surely in the weak topology.}
\end{theorem} 
\begin{proof}{Proof.}
Define, for $k \ge 1$,
$$M_k = k^{1-\delta}\Delta_{k} + \sum_{j=k}^{\infty} \frac{4LR^2}{(j+1)^{1+\delta}}.$$
Then \eqref{condexpformart} implies that $(M_k: k \ge 1)$ is a non-negative supermartingale, since
\begin{align}\label{supermart}
\mathbb{E}\left[\left.(k+1)^{1-\delta}\Delta_{k+1} + \sum_{j=k+1}^{\infty} \frac{4LR^2}{(j+1)^{1+\delta}} \right\vert \, \mathcal{F}_k  \right]  &\leq (k+1)^{1-\delta}\left((1-\eta_k)\Delta_k + LR^2\eta^2_k\right) + \sum_{j=k+1}^{\infty} \frac{4LR^2}{(j+1)^{1+\delta}} \nonumber\\
&\le \frac{k(k+1)^{1-\delta}}{k+2}\Delta_k + \sum_{j=k}^{\infty} \frac{4LR^2}{(j+1)^{1+\delta}} \nonumber\\
&\leq k^{1-\delta}\Delta_{k} + \sum_{j=k}^{\infty} \frac{4LR^2}{(j+1)^{1+\delta}}.
\end{align}
By applying the martingale convergence theorem~\cite{durrett2019probability}, we deduce the existence of a non-negative random variable $X$ such that
\begin{equation} k^{1-\delta}\Delta_{k} + \sum_{j=k}^{\infty} \frac{4LR^2}{(j+1)^{1+\delta}} \,\, \as \,\, X,
\end{equation}
and that
\begin{equation}\label{expbdmart} \mathbb{E}\left[k^{1-\delta}\Delta_{k} + \sum_{j=k}^{\infty} \frac{4LR^2}{(j+1)^{1+\delta}}\right] \geq \mathbb{E}[X], \quad k \geq 1.
\end{equation} 
Moreover, since $\mathbb{E}\left[k^{1-\delta}\Delta_{k} + \sum_{j=k}^{\infty} \frac{4LR^2}{(j+1)^{1+\delta}}\right] \to 0$ as $k \to \infty$ (from Theorem~\ref{thm:sfwcomplexity}),~\eqref{expbdmart} guarantees that $\mathbb{E}[X] \leq 0.$ Consequently, we see that $X=0$ with probability one and then, as $k \to \infty$,
\begin{equation}\label{} k^{1-\delta}\Delta_{k} \,\, \as \,\, 0. \end{equation}
Finally, referring to Section 3 in~\cite{1978dunhar}, we establish that the sequence $(\mu_k, k \geq 1)$ converges to $\mu^* =\arg\inf_{\mu \in \mathcal{P}(\mathcal{X})} J(\mu)$ in the weak topology. {Define the level set \(L(\frac{1}{n}) := \{\mu \in \mathcal{P}(\mathcal{X}) \,|\, J(\mu) \leq J^* + \frac{1}{n}\}\), where \(J^* = \inf_{\mu \in \mathcal{P}(\mathcal{X})} J(\mu)\). Since \(J(\mu_k) \,\, \as \,\, J^*\) as \(k \to \infty\) and the convexity of \(J\) ensures its lower semicontinuity in the weak topology, we can construct a strictly increasing sequence \(\{k_n, n \geq 1\}\) such that for each \(n\),
\begin{equation}
    \mu_k \in L\left(1/n\right), \quad \forall k \geq k_n,
\end{equation}
almost surely. Using this, we define a nested sequence of neighborhoods \(N_k\) by setting \(N_k = L(\frac{1}{n})\) for \(k_n \leq k < k_{n+1}\). Consequently, \(\mu_k \in N_k\) for all \(k \geq k_1\), and the sequence of neighborhoods satisfies \(N_k \downarrow \{\mu^*\}\) monotonically. It follows that \(\mu_k \as \mu^*\) in the weak topology.} 
% Since $\mu^* = \arg\inf_{\mu \in \mathcal{P}(\mathcal{X})} J(\mu)$ is unique, we can conclude that $(\mu_k, k \geq 1)$ converges (in the weak topology) to $\mu^*$. 
\hfill
 $\blacksquare$\end{proof}

%We demonstrate this in the next main theorem on the $L_1$-consistency and complexity of the SFW algorithm. 
%Let's make some observations about Theorem~\ref{thm:sfwcomplexity}. \begin{enumerate} \item[(a)] The increase rate in the number of Monte Carlo samples is a weak requirement that effectively kills the bias in $\hat{x}_{\mu_k,m_{k+1}}$. \item[(b)] Say more about the CLT-scaling assumption in~\eqref{clt-sc}. \item[(c)] The achieved complexity rate is optimal to within a constant. \item[(d)] What is needed for convergence in the measure space? \item[(e)] Can we say anything about the existence and computability of $\hat{x}_{\mu_k,m_{k+1}}$?
%\end{enumerate}

The following straightforward corollary is intended to provide insight when, in practice, a fixed-step method is used and the subproblems are solved inexactly.

\begin{corollary}[Fixed-Step Fixed-Sample Inexact SFW]\label{thm:sfwfixedstep} Suppose that $J$ is convex and $L$-smooth. Consider the fixed-step fixed-sample inexact stochastic Frank-Wolfe recursions
\begin{align}\label{fixedstepsfw} \mu_{k+1} &= (1-\eta)\mu_k + \eta \delta_{\hat{x}_{k+1}(m)} \\
\hat{x}_{k+1}(m) & \in \left\{x \in \mathcal{X} : H_{\mu_k,m}(x) - \min_{x \in \mathcal{X}}H_{\mu_k,m}(x) \leq \tilde{\epsilon} \right\}. \nonumber
\end{align}
Suppose
\begin{equation}
\quad m \ge \left(\frac{4c_0}{LR\eta}\right)^2; \quad \tilde{\epsilon} \le \frac{LR^2}{4}\eta
\end{equation}
and the CLT-scaling assumption~\eqref{clt-sc} holds. Then, the iterates $\mu_k, k \geq 1$ generated by the~\eqref{sfw} recursion satisfy $$\mathbb{E}\left[ J(\mu_k) - J(\mu^*)\right] \leq (1-\eta)^{k-1}\Delta_1 + \left(1-(1-\eta)^{k-1}\right)LR^2\eta,$$ where $R := \sup\{\|\mu_1-\mu_2\|, \mu_1,\mu_2 \in \mathcal{P}(\mathcal{X})\}.$ \end{corollary}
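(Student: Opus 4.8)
The plan is to retrace the one-step analysis in the proof of Theorem~\ref{thm:sfwcomplexity}, modifying it in exactly two places to accommodate (i) the constant step size $\eta$ in place of $\eta_k = 2/(k+2)$, and (ii) the inexact subproblem solve encoded in $\hat{x}_{k+1}(m)$. Starting from the smooth functional inequality~\eqref{descentlemmagen} applied to the increment $\mu_{k+1} - \mu_k = \eta(\delta_{\hat{x}_{k+1}(m)} - \mu_k)$, and using $\|\delta_{\hat{x}_{k+1}(m)} - \mu_k\| \le R$, I would obtain
\[
J(\mu_{k+1}) \le J(\mu_k) + \eta \int_{\mathcal{X}} h_{\mu_k} \, \mathrm{d}(\delta_{\hat{x}_{k+1}(m)} - \mu_k) + \tfrac{L}{2}\eta^2 R^2 .
\]
As in the proof of Theorem~\ref{thm:sfwcomplexity}, I would add and subtract $H_{\mu_k,m}$ inside the integral. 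The single new ingredient is that the $\tilde{\epsilon}$-near-optimality of $\hat{x}_{k+1}(m)$, namely $H_{\mu_k,m}(\hat{x}_{k+1}(m)) \le \min_{x\in\mathcal{X}} H_{\mu_k,m}(x) + \tilde{\epsilon} \le \int H_{\mu_k,m}\, \mathrm{d}\mu^* + \tilde{\epsilon}$, contributes an extra additive $\eta\tilde{\epsilon}$ where the original argument had an exact inequality.

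After recombining the two stochastic-error integrals into $\eta\int (h_{\mu_k} - H_{\mu_k,m}) \, \mathrm{d}(\delta_{\hat{x}_{k+1}(m)} - \mu^*)$ exactly as before, and invoking convexity to write $\int h_{\mu_k} \, \mathrm{d}(\mu^*-\mu_k) \le J^* - J(\mu_k)$, I would condition on $\mathcal{F}_k$ and take expectations. The error integral is controlled by $R\,\mathbb{E}[\|h_{\mu_k} - H_{\mu_k,m}\|_{\infty} \mid \mathcal{F}_k] \le R c_0/\sqrt{m}$ via the CLT-scaling assumption~\eqref{clt-sc}, yielding, with $\Delta_k := J(\mu_k) - J^*$,
\[
\mathbb{E}[\Delta_{k+1}\mid \mathcal{F}_k] \le (1-\eta)\Delta_k + \eta\tilde{\epsilon} + R\eta\,\frac{c_0}{\sqrt{m}} + \tfrac{L}{2}\eta^2 R^2 .
\]
Here the choice of constants pays off: the hypothesis $m \ge (4c_0/(LR\eta))^2$ forces $R\eta\, c_0/\sqrt{m} \le \tfrac14 LR^2\eta^2$, while $\tilde{\epsilon} \le \tfrac14 LR^2\eta$ forces $\eta\tilde{\epsilon} \le \tfrac14 LR^2\eta^2$. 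Adding these to the quadratic term $\tfrac12 LR^2\eta^2$ collapses the bound to $\mathbb{E}[\Delta_{k+1}\mid\mathcal{F}_k] \le (1-\eta)\Delta_k + LR^2\eta^2$.

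Taking a further expectation produces the deterministic linear recursion $a_{k+1} \le (1-\eta)a_k + LR^2\eta^2$ with $a_k := \mathbb{E}[\Delta_k]$. I would solve this by identifying its fixed point $a^\star = LR^2\eta^2/\eta = LR^2\eta$ and writing $a_{k+1} - a^\star \le (1-\eta)(a_k - a^\star)$, so a one-line induction gives
\[
a_k \le (1-\eta)^{k-1}(a_1 - a^\star) + a^\star = (1-\eta)^{k-1}\Delta_1 + \bigl(1-(1-\eta)^{k-1}\bigr)LR^2\eta ,
\]
which is exactly the claimed bound. I do not expect any genuine obstacle here: the only point requiring care is the bookkeeping that makes the three $O(\eta^2)$ contributions combine to the clean constant $LR^2\eta^2$, and the observation that a fixed (rather than diminishing) step converts the telescoping induction of Theorem~\ref{thm:sfwcomplexity} into an elementary geometric-series solve, whose limiting floor $LR^2\eta$ reflects convergence to $\varepsilon$-optimality rather than to $J^*$ itself.
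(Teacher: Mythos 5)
Your proposal is correct and follows essentially the same route as the paper's proof: the same one-step descent bound with the extra $\eta\tilde{\epsilon}$ term from the inexact solve, the same recombination of the stochastic error against $\delta_{\hat{x}_{k+1}(m)}-\mu^*$ bounded via CLT-scaling, and the same collapse of the three $O(\eta^2)$ terms into $LR^2\eta^2$ under the stated choices of $m$ and $\tilde{\epsilon}$. The only difference is cosmetic: where the paper ends with ``the result follows by induction,'' you make that induction explicit via the fixed point $a^\star = LR^2\eta$ of the linear recursion, which is exactly the intended argument.
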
 

\begin{proof}{Proof.}
Notice that
\begin{equation}\label{inexactCond}
    \int_{\mathcal{X}} H_{\mu_k,m}(x) \,  \delta_{\hat{x}_{k+1}(m)} (\mbox{d}x) \leq \min_{x \in \mathcal{X}}H_{\mu_k,m}(x) + \tilde{\epsilon} \leq \int_{\mathcal{X}} H_{\mu_k,m}(x) \,  \mu^* (\mbox{d}x) + \tilde{\epsilon}.
\end{equation}
By following the same procedure as outlined in Theorem~\ref{thm:sfwcomplexity} and substituting~\eqref{inexactCond}, we obtain that \begin{align*}
    J(\mu_{k+1}) &\leq J(\mu_k) + \eta \int_{\mathcal{X}} H_{\mu_k,m} \,  \mbox{d}(\mu^*-\mu_k) + \tilde{\epsilon} \eta + \eta \int_{\mathcal{X}} (h_{\mu_k}-H_{\mu_k,m}) \,  \mbox{d}(\delta_{\hat{x}_{k+1}(m)}-\mu_k) + \frac{L}{2}\eta^2 R^2 \\
    &\leq J(\mu_k) + \eta (J^*-J(\mu_k)) + \tilde{\epsilon} \eta + \eta \int_{\mathcal{X}} (h_{\mu_k}-H_{\mu_k,m}) \,  \mbox{d}(\delta_{\hat{x}_{k+1}(m)}-\mu^*) + \frac{L}{2}\eta^2 R^2.
\end{align*}
Applying the same method as demonstrated in~\eqref{condexpformart}, with a fixed step size $\eta$ for each $k$, we can derive the inequality
\begin{align}  \mathbb{E}\left[\Delta_{k+1} \, \vert \, \mathcal{F}_k \right]
    &\leq (1-\eta)\Delta_k + R\eta \frac{c_0}{\sqrt{m}} + \tilde{\epsilon} \eta + \frac{L}{2}\eta^2R^2.
\end{align}
Taking expectations, $\mathbb{E}[\Delta_{k+1}] \le (1-\eta)\mathbb{E}[\Delta_k] + LR^2\eta^2$ and the result follows by induction. \hfill
 $\blacksquare$\end{proof}

 Notice that Corollary~\ref{thm:sfwfixedstep} implies the scaling relationships {$m = O(\eta^{-2})$} and $\tilde{\epsilon} = O(\eta)$ between the fixed sample size $m$, fixed step size $\eta$, and the tolerance $\tilde{\epsilon}$. Furthermore, and in analogy to results in the Euclidean context~\cite{2018botcurnoc}, Corollary~\ref{thm:sfwfixedstep} implies exponential convergence to the $\tilde{\epsilon}$-ball assuming that the fixed step and fixed sample size are chosen according to the scaling relationships.

 We next state a central limit theorem on the estimated objective function value at the estimated solution. Akin to the Euclidean context, this result could in principle form the basis for statistical inference, and for finite-time algorithmic stopping of the (sFW) recursion. 

\begin{theorem}[Central Limit Theorem]\label{thm:sfwclt} Suppose that the iterates $\mu_k, k \geq 1$ generated by the~\eqref{sfw} recursion satisfy the conditions that $\mathcal{G}:=\left\{ F(\mu, \cdot): \mu \in \mathcal{P}(\mathcal{X}) \right\}$ is a P-Donsker class, and $\| F(\mu_n, \cdot) - F(\mu^*, \cdot) \|_* \, \inP \, 0$, where $\mu^* := \arg\inf_{\mu \in \mathcal{P}(\mathcal{X})} J(\mu)$ is unique. Then $$\sqrt{n}\left( J_n(\mu_n) - J(\mu^*) \right) \,\, \inD \,\, \mathcal{N} (0, \mathbb{E}[F(\mu^*, Y)^2]-\mathbb{E}[F(\mu^*, Y)]^2 ).$$ { Here, for any $g$ in the space $\left\{ g : \mathcal{X} \to \mathbb{R} \, : \, \mathbb{E}[g(Y)^2] < \infty \right\}$, the norm $\|\cdot\|_*$ is defined as $\|g\|_*:=\mathbb{E}[g(Y)^2]^{\frac{1}{2}}$.} \end{theorem}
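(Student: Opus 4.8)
The plan is to view $\sqrt{n}\bigl(J_n(\mu_n)-J(\mu^*)\bigr)$ as the empirical process evaluated at the random, data-dependent index $F(\mu_n,\cdot)\in\mathcal{G}$, and to strip away the index-randomness using the asymptotic equicontinuity furnished by the $P$-Donsker hypothesis. Introduce the empirical measure $\mathbb{P}_n f:=\frac1n\sum_{j=1}^n f(Y_j)$, the population functional $Pf:=\mathbb{E}[f(Y)]$, and the empirical process $\mathbb{G}_n f:=\sqrt{n}(\mathbb{P}_n f-Pf)$, so that $J_n(\mu)=\mathbb{P}_n F(\mu,\cdot)$ and $J(\mu)=PF(\mu,\cdot)$. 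The starting point is the exact decomposition
\begin{equation}
\sqrt{n}\bigl(J_n(\mu_n)-J(\mu^*)\bigr)=\mathbb{G}_n F(\mu_n,\cdot)+\sqrt{n}\bigl(J(\mu_n)-J(\mu^*)\bigr).
\end{equation}

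First I would dispose of the deterministic bias term $\sqrt{n}(J(\mu_n)-J(\mu^*))$. Since $\mu^*$ is optimal this quantity is nonnegative, and Theorem~\ref{thm:sfwasconsistency} provides $n^{1-\delta}(J(\mu_n)-J^*)\as 0$ for every $0<\delta<1$. Fixing any $\delta<1/2$ gives $\sqrt{n}(J(\mu_n)-J(\mu^*))=o(n^{\,\delta-1/2})\as 0$, so the bias leaks nothing into the limit; the only point to check here is that the almost sure rate of Theorem~\ref{thm:sfwasconsistency} genuinely outpaces the $\sqrt{n}$ inflation, which it does precisely because $\delta$ may be taken below $1/2$.

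For the empirical process term I would appeal to the asymptotic equicontinuity of a Donsker class~\cite{geer2000empirical}. Since $\mathcal{G}$ is $P$-Donsker, $F(\mu_n,\cdot),F(\mu^*,\cdot)\in\mathcal{G}$, and $\|F(\mu_n,\cdot)-F(\mu^*,\cdot)\|_*\inP 0$ in the $L_2(P)$ seminorm, equicontinuity yields
\begin{equation}
\mathbb{G}_n F(\mu_n,\cdot)-\mathbb{G}_n F(\mu^*,\cdot)\inP 0.
\end{equation}
The remaining term $\mathbb{G}_n F(\mu^*,\cdot)=\sqrt{n}\bigl(\tfrac1n\sum_{j=1}^n F(\mu^*,Y_j)-J(\mu^*)\bigr)$ is a normalized sum of i.i.d.\ centered summands with finite variance (finiteness being part of the Donsker property), so the classical Lindeberg--L\'evy CLT yields $\mathbb{G}_n F(\mu^*,\cdot)\inD \mathcal{N}\bigl(0,\,\mathbb{E}[F(\mu^*,Y)^2]-\mathbb{E}[F(\mu^*,Y)]^2\bigr)$.

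Finally I would reassemble via Slutsky's theorem: the equicontinuity display combined with the CLT gives $\mathbb{G}_n F(\mu_n,\cdot)\inD\mathcal{N}(0,\mathrm{Var}(F(\mu^*,Y)))$, and adding back the almost surely vanishing bias term leaves exactly the asserted limit. The principal obstacle is the equicontinuity step with a \emph{random, possibly data-dependent} index $\mu_n$: the subtlety is that $\mu_n$ may be correlated with the very samples $Y_1,\dots,Y_n$ entering $\mathbb{G}_n$, so one cannot condition naively. What rescues the argument is that Donsker equicontinuity is uniform over the entire index class, hence applies to any $\mathcal{G}$-valued random sequence that is stochastically $L_2(P)$-close to $F(\mu^*,\cdot)$; verifying the hypothesis $\|F(\mu_n,\cdot)-F(\mu^*,\cdot)\|_*\inP 0$ (assumed here) together with the requisite measurability is therefore the crux.
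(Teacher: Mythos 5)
Your proposal is correct and takes essentially the same route as the paper's own proof: the identical three-term decomposition (empirical process at $F(\mu^*,\cdot)$, an equicontinuity remainder $\mathbb{G}_n F(\mu_n,\cdot)-\mathbb{G}_n F(\mu^*,\cdot)$, and the $\sqrt{n}$-scaled bias $\sqrt{n}(J(\mu_n)-J(\mu^*))$), with the $P$-Donsker asymptotic equicontinuity disposing of the random-index term, Theorem~\ref{thm:sfwasconsistency} killing the bias, and Slutsky reassembling the limit. Your explicit observation that the almost-sure rate beats the $\sqrt{n}$ inflation precisely because $\delta$ may be taken below $1/2$ makes rigorous a step the paper asserts without comment.
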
  
\begin{proof}{Proof.} { Suppose the random variable \( Y \) follows the distribution \( Q \). Let \( Q_n \) denote the empirical distribution based on independent samples \( Y_1, \dots, Y_n \), given by $P_n=\frac{1}{n}\sum_{i=1}^n\delta_{Y_i}$. We define the empirical process \( \nu_n \) as: } \begin{equation*}
    \nu_n(g) := \sqrt{n} \int_{\mathcal{X}} g \,  \mbox{d}({Q_n-Q}), \quad g \in \mathcal{G},
\end{equation*} We can write \begin{align}\label{AnBnCn}
    \sqrt{n}\left( J_n(\mu_n) - J(\mu^*) \right) &= \nu_n(F(\mu_n, \cdot)) + \sqrt{n}\left( J(\mu_n) - J(\mu^*) \right) \nonumber\\ &= \nu_n(F(\mu^*, \cdot)) +\left( \nu_n(F(\mu_n, \cdot))-\nu_n(F(\mu^*, \cdot)) \right) + \sqrt{n}\left( J(\mu_n) - J(\mu^*) \right) \nonumber\\ &=: A_n + B_n + C_n.
\end{align} Given that $\mathcal{G}$ is P-Donsker, according to Definition 6.1 in~\cite{geer2000empirical}, we know \begin{equation*}
    A_n \,\, \inD \,\, \mathcal{N} (0, \mathbb{E}[F(\mu^*, Y)^2]-\mathbb{E}[F(\mu^*, Y)]^2 ).
\end{equation*} Based on Theorem~\ref{thm:sfwasconsistency}, we can conclude \begin{equation*}
    C_n \,\, \as \,\, 0.
\end{equation*} Finally, we claim that \begin{equation*}
    B_n \,\, \inP \,\, 0.
\end{equation*} Since $\mathcal{G}$ is P-Donsker, for any $\eta>0$, there exists $\delta>0$ such that \begin{equation*}
    \limsup_{n\rightarrow\infty} \, P \left( \sup_{\|g_1-g_2\|_* \leq \delta} \left| \nu_n(g_1)-\nu_n(g_2) \right| > \eta \right) < \eta.
\end{equation*} Then, define $\tilde{\Omega}_n:= \left\{ \sup_{\|g_1-g_2\|_* \leq \delta} \left| \nu_n(g_1)-\nu_n(g_2) \right| > \eta \right\}$, we have \begin{equation*}
    P \left( \tilde{\Omega}_n \right) < 2 \eta
\end{equation*} eventually. Now, let $\Omega_n:= \left\{ \| F(\mu_n, \cdot) - F(\mu^*, \cdot) \|_* > \delta \right\}$, as $\| F(\mu_n, \cdot) - F(\mu^*, \cdot) \|_* \, \inP \, 0$, we have \begin{equation*}
    P \left( \Omega_n \right) <  \eta
\end{equation*} eventually. Since $\Omega^c_n \cap \tilde{\Omega}^c_n \subset \left\{ \left| \nu_n(F(\mu_n, \cdot))-\nu_n(F(\mu^*, \cdot)) \right| \leq \eta \right\}$ and $P\left( \tilde{\Omega}_n \cup \Omega_n \right) \leq 3 \eta$ eventually, we can derive \begin{equation*}
    P\left(  \left| \nu_n(F(\mu_n, \cdot))-\nu_n(F(\mu^*, \cdot)) \right| < \eta \right) \geq 1 - 3\eta
\end{equation*} eventually, which implies $B_n \,\, \inP \,\, 0$. Consequently, returning to~\eqref{AnBnCn}, we can conclude \begin{equation*}\sqrt{n}\left( J_n(\mu_n) - J(\mu^*) \right) \,\, \inD \,\, \mathcal{N} (0, \mathbb{E}[F(\mu^*, Y)^2]-\mathbb{E}[F(\mu^*, Y)]^2 ). \end{equation*} implying that the assertion holds.
\hfill $\blacksquare$\end{proof}

The two conditions, (i) $\mathcal{G}:=\left\{ F(\mu, \cdot): \mu \in \mathcal{P}(\mathcal{X}) \right\}$ is a P-Donsker class, and (ii) $\| F(\mu_n, \cdot) - F(\mu^*, \cdot) \|_* \, \inP \, 0$, of Theorem~\ref{thm:sfwclt} are routinely met. Consider again the $P$-means problem of Section~\ref{sec:pmeans}, where we have seen that the function $F(\mu, \cdot)$ takes the form $F(\mu, Y)=\sum_{i=1}^{n_0} u \exp \left\{-\mu(B(\ell_i,Y))\right\}.$ Applying Theorem 6.1 in~\cite{geer2000empirical}, we can prove that $\mathcal{G}$ is indeed a P-Donsker class, and furthermore, since $\mu_n$ weakly converges to $\mu^*$, that $\| F(\mu_n, \cdot) - F(\mu^*, \cdot) \|_* \, \inP \, 0$.

\subsection{Handling Nonconvex Objectives}

To analyze the scenario where $J(\cdot)$ is a nonconvex and $L$-smooth function, we introduce the Frank-Wolfe gap in the probability space defined as \begin{equation}\label{FWgap}
    G(\mu):= \max_{\nu \in \mathcal{P}(\mathcal{X})} -J'_{\mu}(\nu),
\end{equation} In Euclidean spaces the Frank-Wolfe gap serves as a crucial criterion for assessing the convergence of Frank-Wolfe methods~\cite{pokutta2023frankwolfe}, particularly in nonconvex settings~\cite{lacostejulien2016convergence,reddi2016stochastic}. In the space of probability measures, $\mu \in \mathcal{P}(\mathcal{X})$ is locally optimal if and only if the Frank-Wolfe gap $G(\mu)=0$. Even when $J$ lacks convexity, the Fixed-Step Fixed-Sample Stochastic Frank-Wolfe method can still be employed, leading to the following result.

\begin{theorem}\label{thm:sfwcomplexNonConvex} Suppose $J$ is $L$-smooth but not necessarily convex, and the CLT-scaling assumption~\eqref{clt-sc} holds. The iterates $\mu_k, k \geq 1$ generated by the~\eqref{sfw} recursion with parameters \begin{equation*} \eta_k = \eta = \sqrt{\frac{2(J(\mu_0)-J(\mu^*))}{L\, R^2 \, T}}; \qquad m_k = m = T \end{equation*} for all $k\in\{0,\dots, T-1 \}$ satisfy \begin{equation}
    \mathbb{E}\left[ G(\mu_a)\right] \leq \frac{R}{\sqrt{T}}\left( c_0 + \sqrt{2L(J(\mu_0)-J(\mu^*))}\right)
\end{equation} where $\mu_a$ is chosen uniformly at random from $\{\mu_k\}_{k=0}^{T-1}$.\end{theorem}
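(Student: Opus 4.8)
The plan is to follow the complexity analysis of Theorem~\ref{thm:sfwcomplexity} closely, with two modifications: the global optimizer $\mu^*$ is replaced by the measure attaining the Frank--Wolfe gap at each iterate, and only the \emph{upper} half of the smooth functional inequality is invoked. The right-hand inequality in~\eqref{descentlemmagen} uses $L$-smoothness alone; convexity enters only through the left-hand (lower) bound, which we discard. Since the~\eqref{sfw} step gives $\mu_{k+1}-\mu_k = \eta(\delta_{\hat x_{k+1}(m)} - \mu_k)$, I would begin from
\[
J(\mu_{k+1}) \le J(\mu_k) + \eta\, J'_{\mu_k}\!\big(\delta_{\hat x_{k+1}(m)} - \mu_k\big) + \tfrac{L}{2}\eta^2 \big\|\delta_{\hat x_{k+1}(m)} - \mu_k\big\|^2,
\]
and bound the squared norm by $R^2$.

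The crux is to rewrite $J'_{\mu_k}(\delta_{\hat x_{k+1}(m)}-\mu_k) = \int_{\mathcal X} h_{\mu_k}\,\mathrm{d}(\delta_{\hat x_{k+1}(m)}-\mu_k)$ as $-G(\mu_k)$ plus a single estimation-error term. Let $x_k^* \in \arg\min_{x\in\mathcal X} h_{\mu_k}(x)$ and $\nu_k^* := \delta_{x_k^*}$, so that by~\eqref{FWgap} one has $\int h_{\mu_k}\,\mathrm{d}(\nu_k^*-\mu_k) = -G(\mu_k)$. Splitting $h_{\mu_k} = H_{\mu_k,m} + (h_{\mu_k}-H_{\mu_k,m})$ and using that $\hat x_{k+1}(m)$ minimizes $H_{\mu_k,m}$, hence $\int H_{\mu_k,m}\,\mathrm{d}\delta_{\hat x_{k+1}(m)} \le \int H_{\mu_k,m}\,\mathrm{d}\nu_k^*$, the $\mu_k$-terms cancel exactly as in the proof of Theorem~\ref{thm:sfwcomplexity}, leaving
\[
J'_{\mu_k}\!\big(\delta_{\hat x_{k+1}(m)}-\mu_k\big) \le -G(\mu_k) + \int_{\mathcal X}(h_{\mu_k}-H_{\mu_k,m})\,\mathrm{d}\big(\delta_{\hat x_{k+1}(m)}-\nu_k^*\big).
\]
Bounding the integral by $R\,\|h_{\mu_k}-H_{\mu_k,m}\|_\infty$ via H\"older's inequality then yields the one-step estimate $J(\mu_{k+1}) \le J(\mu_k) - \eta G(\mu_k) + R\eta\|h_{\mu_k}-H_{\mu_k,m}\|_\infty + \tfrac{L}{2}\eta^2R^2$.

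The remainder is routine. Rearranging isolates $\eta G(\mu_k)$; summing over $k=0,\dots,T-1$ telescopes the $J$-terms and, using $J(\mu_T)\ge J(\mu^*)$, bounds them by $J(\mu_0)-J(\mu^*)$. Conditioning on $\mathcal F_k$ and applying~\eqref{clt-sc} with $m=T$ gives $\mathbb E\|h_{\mu_k}-H_{\mu_k,m}\|_\infty \le c_0/\sqrt T$; dividing by $\eta T$ and recognizing $\tfrac1T\sum_{k=0}^{T-1}\mathbb E[G(\mu_k)] = \mathbb E[G(\mu_a)]$ for $\mu_a$ uniform on $\{\mu_k\}_{k=0}^{T-1}$ produces
\[
\mathbb E[G(\mu_a)] \le \frac{J(\mu_0)-J(\mu^*)}{\eta T} + \frac{Rc_0}{\sqrt T} + \frac{L\eta R^2}{2}.
\]
Substituting $\eta = \sqrt{2(J(\mu_0)-J(\mu^*))/(LR^2T)}$ makes the first and third terms equal, each being $\frac{R}{\sqrt T}\sqrt{(J(\mu_0)-J(\mu^*))L/2}$, so their sum is $\frac{R}{\sqrt T}\sqrt{2L(J(\mu_0)-J(\mu^*))}$, which together with the middle term gives the claimed bound.

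The step I expect to be most delicate is the cancellation producing the \emph{single} error term $\int(h_{\mu_k}-H_{\mu_k,m})\,\mathrm{d}(\delta_{\hat x_{k+1}(m)}-\nu_k^*)$. A naive bound of $h_{\mu_k}(\hat x_{k+1}(m)) - \min_x h_{\mu_k}(x)$ by $2\|h_{\mu_k}-H_{\mu_k,m}\|_\infty$ would double the sampling term and yield the inferior constant $2Rc_0/\sqrt T$; obtaining the sharp $Rc_0/\sqrt T$ requires comparing $\hat x_{k+1}(m)$ against $\nu_k^*$ through the optimality of $\hat x_{k+1}(m)$ for $H_{\mu_k,m}$, precisely mirroring the telescoping identity in Theorem~\ref{thm:sfwcomplexity}. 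The only other care needed is the measurability bookkeeping that legitimizes the conditional use of~\eqref{clt-sc}, since $\mu_k$ is $\mathcal F_k$-measurable while $H_{\mu_k,m}$ is formed from fresh samples.
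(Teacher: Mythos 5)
Your proposal is correct and follows essentially the same route as the paper's proof: the same splitting $h_{\mu_k}=H_{\mu_k,m}+(h_{\mu_k}-H_{\mu_k,m})$, the same use of the optimality of $\hat{x}_{k+1}(m)$ to swap in the gap-attaining measure so that the $\mu_k$-terms cancel and only the single error term $\int(h_{\mu_k}-H_{\mu_k,m})\,\mathrm{d}(\delta_{\hat{x}_{k+1}(m)}-\nu_k)$ survives, followed by H\"older, telescoping, and the step-size substitution. The only cosmetic difference is that you instantiate the gap-attaining measure concretely as $\delta_{x_k^*}$ via Lemma~\ref{lem:dFW}, whereas the paper works with an abstract $\nu_k\in\arg\max_{\nu}J'_{\mu_k}(\mu_k-\nu)$; these coincide, so the arguments are the same.
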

\begin{proof}{Proof.} At each iteration $k$, let $\nu_k \in \argmax_{\nu \in \mathcal{P}(\mathcal{X})} -J'_{\mu_k}(\nu)$, implying $G(\mu_k)=-J'_{\mu_k}(\nu_k)$. Notice that
\begin{align}
    J(\mu_{k+1}) &\leq J(\mu_k) + \eta \int_{\mathcal{X}} H_{\mu_k,m} \,  \mbox{d}(\delta_{\hat{x}_{k+1}(m)}-\mu_k) + \eta \int_{\mathcal{X}} (h_{\mu_k}-H_{\mu_k,m}) \,  \mbox{d}(\delta_{\hat{x}_{k+1}(m)}-\mu_k) + \frac{L}{2}\eta^2 R^2 \nonumber \\
    &\leq J(\mu_k) + \eta \int_{\mathcal{X}} H_{\mu_k,m} \,  \mbox{d}(\nu_k-\mu_k) + \eta \int_{\mathcal{X}} (h_{\mu_k}-H_{\mu_k,m}) \,  \mbox{d}(\delta_{\hat{x}_{k+1}(m)}-\mu_k) + \frac{L}{2}\eta^2 R^2 \nonumber \\
    &= J(\mu_k) + \eta \int_{\mathcal{X}} h_{\mu_k} \,  \mbox{d}(\nu_k-\mu_k) + \eta \int_{\mathcal{X}} (h_{\mu_k}-H_{\mu_k,m}) \,  \mbox{d}(\delta_{\hat{x}_{k+1}(m)}-\nu_k) + \frac{L}{2}\eta^2 R^2 \nonumber \\ &\leq J(\mu_k) - \eta G(\mu_k) + \eta R \| h_{\mu_k} - H_{\mu_k,m} \|_{\infty} + \frac{L}{2}\eta^2 R^2.
\end{align} The first inequality follows from the $L$-smoothness, while the second one follows from the optimality of $\hat{x}_{k+1}(m)$, { i.e., $\hat{x}_{k+1}(m)  \in \argmin_{x \in \mathcal{X}} \,\, H_{\mu_k,m}(x)$}. The last inequality arises from Hölder's inequality. Taking the expectation and utilizing~\eqref{clt-sc}, we obtain \begin{equation*}
      \eta \mathbb{E}\left[G(\mu_k) \right] \leq \mathbb{E}\left[J(\mu_{k}) \right] - \mathbb{E}\left[J(\mu_{k+1}) \right] + \eta R \frac{c_0}{\sqrt{m}} + \frac{L}{2}\eta^2 R^2.
\end{equation*}
Then, summing over $k$
\begin{align*}
    \eta \sum_{k=0}^{T-1} \mathbb{E}\left[G(\mu_k) \right] &\leq \mathbb{E}\left[J(\mu_{0}) \right] - \mathbb{E}\left[J(\mu_{T}) \right] + T \eta R \frac{c_0}{\sqrt{m}} + \frac{L}{2} T \eta^2 R^2 \\ &\leq J(\mu_{0})  - J(\mu^*)  + T \eta R \frac{c_0}{\sqrt{m}} + \frac{L}{2} T \eta^2 R^2.
\end{align*} Therefore, \begin{align*}
    \mathbb{E}\left[G(\mu_a) \right] \leq \frac{J(\mu_{0})  - J(\mu^*)}{T \eta} + \frac{c_0}{\sqrt{m}} R + \frac{1}{2} L R^2 \eta = \frac{R}{\sqrt{T}}\left(c_0 + \sqrt{2L(J(\mu_{0})  - J(\mu^*))} \right)
\end{align*} and the assertion holds  \hfill $\blacksquare$
\end{proof}

%\section{Commentary on Fully Corrective Frank-Wolfe for Probability Spaces.}
%\hl{fully corrective CGM and the derivative free context}

{\section{NUMERICAL ILLUSTRATION}\label{sec:numer}
This section provides a numerical validation of the fully-corrective Frank-Wolfe (fcFW) method from Algorithm~\ref{alg:DFW}. We apply fcFW to the Gaussian deconvolution example introduced in Section~\ref{sec:deconvolution} to evaluate its effectiveness in recovering probability measures from noisy observations.  

Following the setup in~\cite{yan2024learning}, we conduct experiments where the latent variables \( W_i \), for \( i=1,\dots,n \), in~\eqref{eq:deconvolution} are sampled from two distinct distributions:  
\begin{itemize}
    \item \textbf{Discrete distribution:}  
    \begin{equation}\label{mu_a}
        \mu_a = \frac{1}{3} \delta_{-1} + \frac{1}{3} \delta_{1} + \frac{1}{3} \delta_{10}.
    \end{equation}
    Prior studies~\cite{yan2024learning, jin2016local} have shown that classical expectation-maximization (EM) and gradient descent methods struggle in this setting due to poor local optima. We assess whether fcFW provides a robust alternative.  
    \item \textbf{Continuous distribution:}  
    \begin{equation}
        \mu_b = \mathcal{N}(0,I_d).
    \end{equation}
    We conduct experiments with \( d = 10 \) to evaluate the scalability of fcFW in higher dimensions.
\end{itemize}

\begin{figure}[htbp]
    \centering
    \begin{subfigure}[b]{0.45\textwidth}
        \centering
        \includegraphics[width=\textwidth]{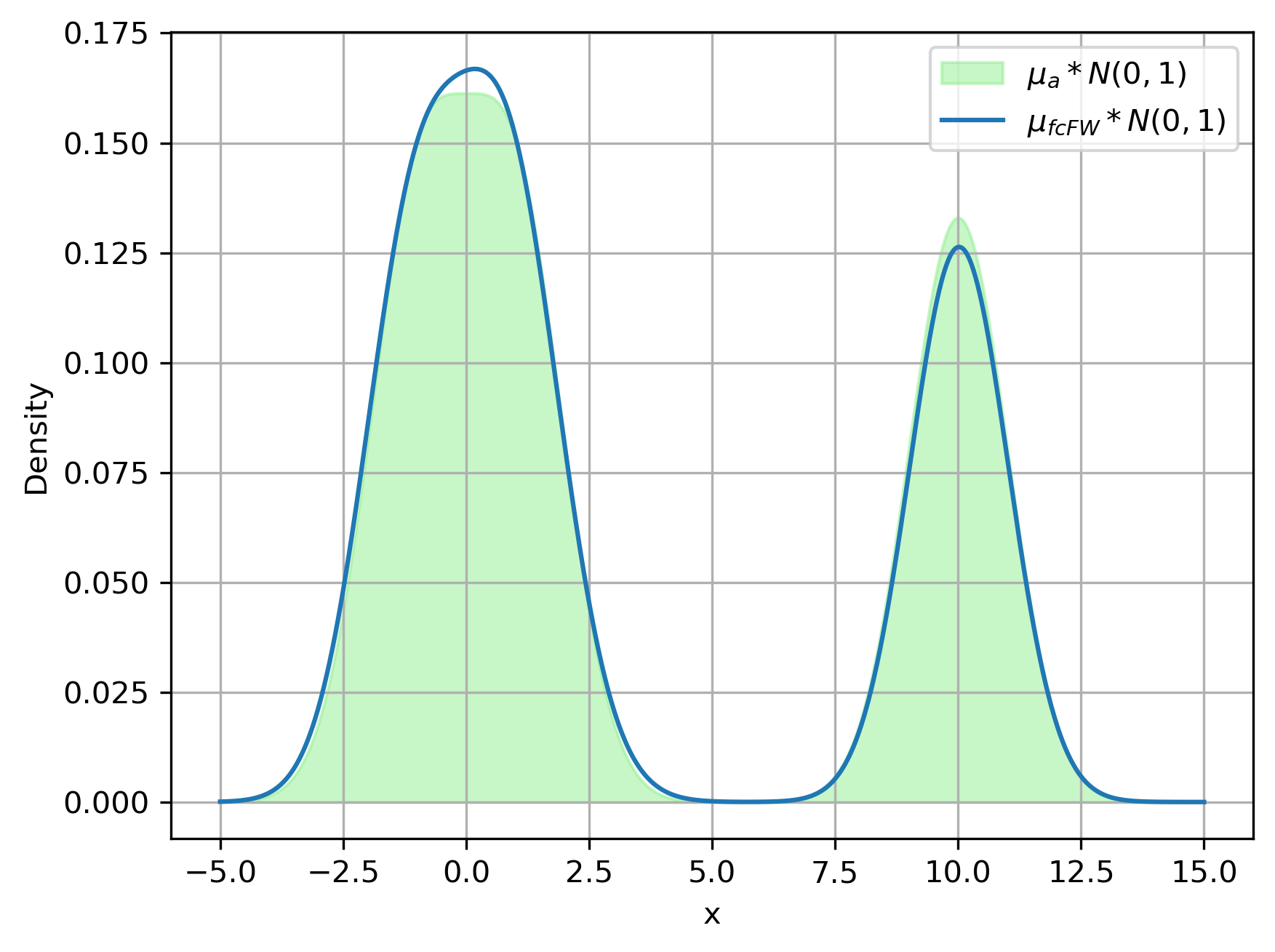}
        \caption{Density comparison}
        \label{fig:ex1density}
    \end{subfigure}
    \hfill
    \begin{subfigure}[b]{0.45\textwidth}
        \centering
        \includegraphics[width=\textwidth]{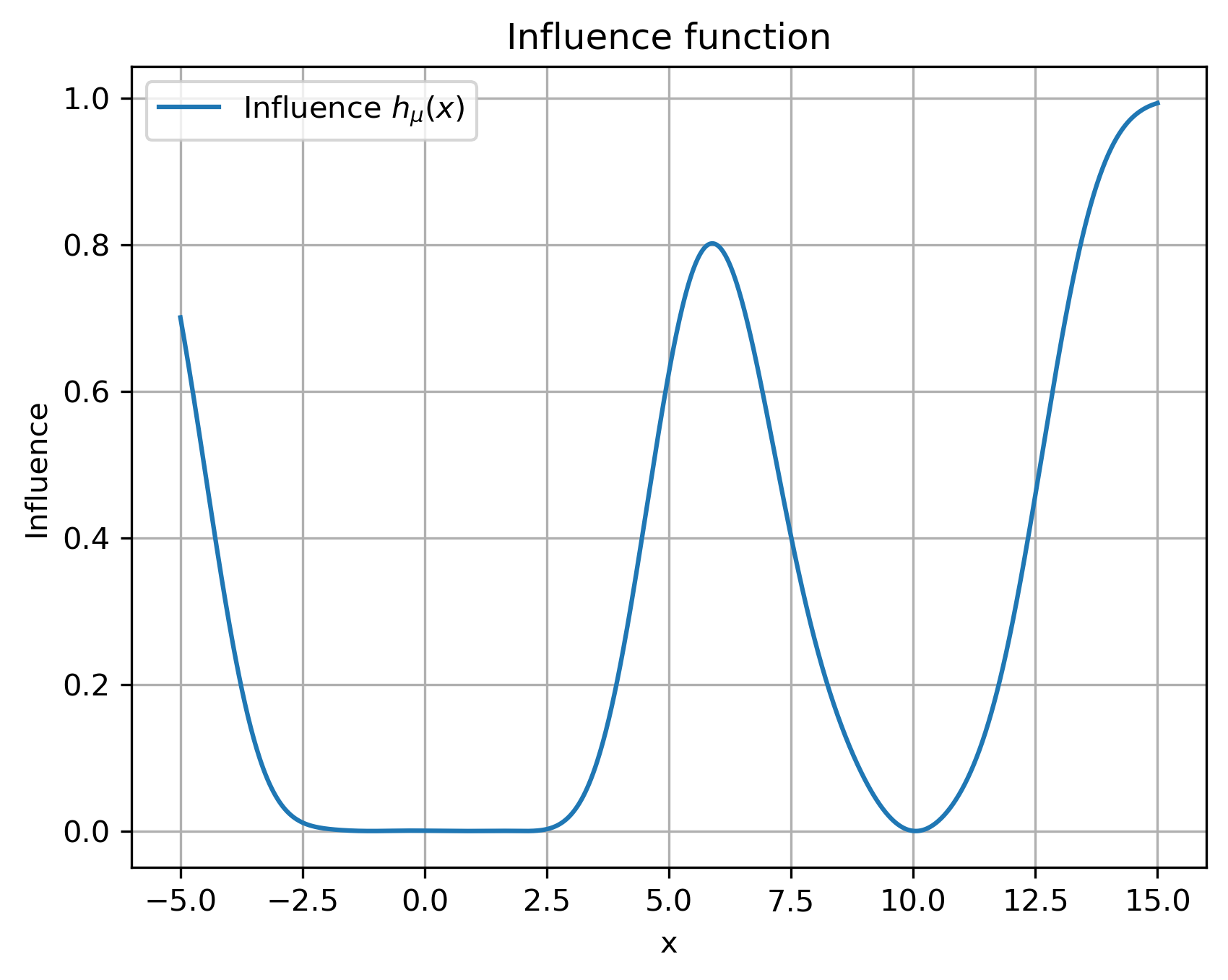}
        \caption{Influence function at $\mu_{fc-FW}$}
        \label{fig:ex1Infl}
    \end{subfigure}
    \caption{fcFW Results for Gaussian Deconvolution (Discrete Case, \(n=1500\)).  
    (a) Comparison of the recovered density \(\mu_{fcFW} * N(0,1)\) (blue) with the population density \(\mu_a * N(0,1)\) (shaded).  
    (b) The influence function at \(\mu_{fcFW}\) is non-negative, verifying global optimality as stated in Lemma~\ref{lem:optCondition}.}

    \label{fig:experiment1}
\end{figure}

In our first experiment, we examine the discrete distribution case. Prior work~\cite{jin2016local} showed that the log-likelihood of a three-component Gaussian mixture model in dimension \( d=1 \) has a poor local maximum, where expectation-maximization (EM) and gradient descent often get trapped. This issue was further confirmed through numerical experiments in~\cite{yan2024learning}. These challenges make this a useful test case for evaluating whether fcFW offers a more reliable alternative.  

To assess the performance of fcFW, we generate \( n = 1500 \) samples \(\{Y_i\}_{1\leq i \leq n}\) from the distribution \(\mu_a * N(0,1)\). Keeping these samples fixed, we apply the fcFW method and obtain the probability measure \(\mu_{fcFW}\) after 200 iterations. We repeat the experiment 20 times independently.  

Figure~\ref{fig:experiment1} shows the results from one trial. In Figure~\ref{fig:ex1density}, the density of \(\mu_{fcFW} * N(0,1)\) closely matches the population density \(\mu_a * N(0,1)\), indicating successful recovery. Figure~\ref{fig:ex1Infl} shows that the influence function at \(\mu_{fcFW}\) remains non-negative, which, by Lemma~\ref{lem:optCondition}, confirms convergence to a global optimum. Similar results were observed across all 20 trials.

Previous studies~\cite{jin2016local, yan2024learning} found that EM and gradient descent struggle in this setting due to poor local optima. While we do not implement these methods here, our results suggest that fcFW reliably recovers the underlying measure, making it a promising alternative.

\begin{figure}[htbp]
    \centering
    \includegraphics[width=0.5\textwidth]{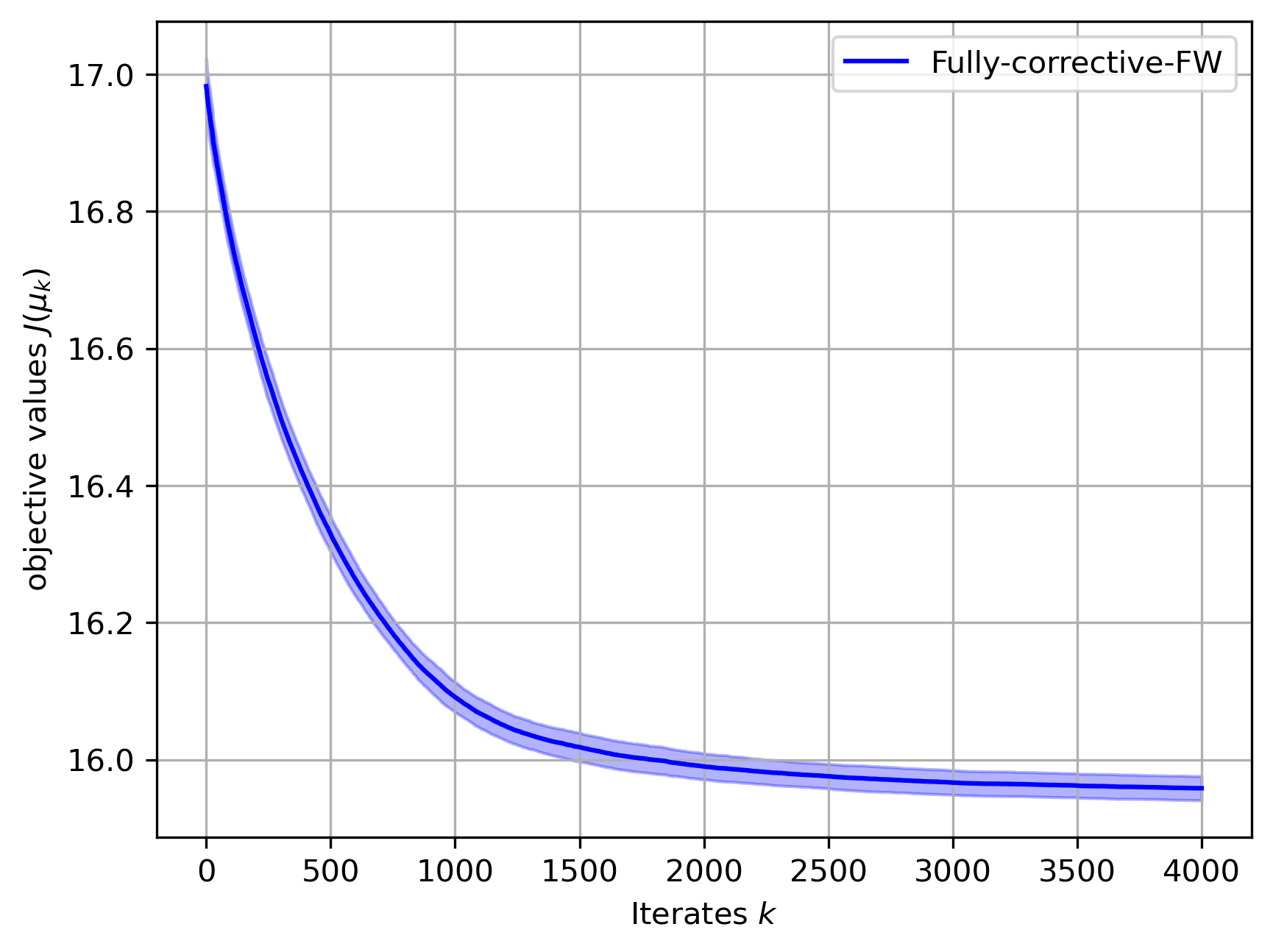}
    \caption{Objective values \( J(\mu_k) \) of fcFW over 4000 iterations for the continuous case \( \mu_b \) with \( d=10 \). The shaded region represents the standard deviation over 10 independent trials.}
    \label{fig:ex2obj}
\end{figure}

In the second experiment, we test the performance of fcFW in a higher-dimensional setting with \( d=10 \). Higher dimensions introduce additional challenges, making it important to assess scalability. We generate \( n = 1500 \) samples \(\{Y_i\}_{1\leq i \leq n}\) from the distribution \(\mu_b * \mathcal{N}(0,I_{10})\) and keep them fixed throughout the experiment. We apply fcFW and obtain the probability measure \(\mu_{\text{fcFW}}\) after 4000 iterations across 10 independent trials.  

Figure~\ref{fig:ex2obj} shows the objective values of fcFW over iterations for these trials, demonstrating a steady decrease and confirming convergence. This also highlights an advantage of fcFW over gridding-based approaches, as discussed in Section~\ref{sec:grid}. Unlike gridding, which requires a predefined discretization of the space and becomes computationally expensive in higher dimensions, fcFW operates directly in the infinite-dimensional space, avoiding the need for discretization.  

Prior work~\cite{yan2024learning} has shown that particle-based methods, such as the Wasserstein-Fisher-Rao (WFR) gradient flow, perform well in both discrete and continuous settings. These methods approximate probability measures with empirical measures and update both the support and weights over time through a flow-based approach. While particle methods are effective, fcFW provides a direct optimization framework in infinite-dimensional space, making it a promising alternative.
}

\section{CONCLUDING REMARKS}\label{sec:conc} Incorporating the influence function as the first variational object within a primal recursion such as Frank-Wolfe provides a powerful first-order recursion for stochastic optimization over probability spaces. The resulting paradigm is especially important since there appear to be important broad contexts such as emergency response and experimental design where the influence function is available as a natural first-order derivative for incorporation into a deterministic or a stochastic oracle. Furthermore, in analogy with stochastic gradient recursions in Euclidean spaces, these recursions exhibit convergence behavior without imposing strict conditions such as CLLF or sparsity. Ongoing work tries to extend the proposed methods to incorporate different types of constraints, e.g., structural constraints such as the existence of an $L_2$ density, or functional constraints such as restrictions on the moments.   

{ An interesting direction for future investigation is understanding the connection between the FW methods proposed here and common particle-based methods such as the Wasserstein-Fisher-Rao (WFR) gradient flow~\cite{yan2024learning}. Specifically, while FW and WFR methods follow different update strategies, they share theoretical connections, as FW can also be viewed from a particle-based perspective. WFR keeps a fixed number of particles and updates both their support and weights at each iteration, whereas FW iteratively adds new particles while keeping the support of existing ones unchanged, adjusting only their weights. Future research could explore a hybrid approach that combines these strategies—allowing for both the addition of new particles and updates to existing supports under different conditions. Moreover, the influence function used in FW methods also plays a key role in WFR and other particle-based approaches, further linking these frameworks.}

\section*{Acknowledgments}
This work was partially supported by National Science Foundation grants CMMI-2035086, DMS-2230023 and OAC-2410950. { We thank the editorial team for helpful reports that improved the content and exposition of the paper.}

\bibliographystyle{plain}
\bibliography{references}
\end{document}